\renewcommand{\leq}{\ensuremath{\leqslant}}
\renewcommand{\geq}{\ensuremath{\geqslant}}
\newcommand{\minimize}[2]{\ensuremath{\underset{\substack{{#1}}}%
{\mathrm{minimize}}\;\;#2 }}
\newcommand{\menge}[2]{\big\{{#1} \mid {#2}\big\}} 
\newcommand{\Menge}[2]{\bigg\{{#1}~\bigg|~{#2}\bigg\}}
\newcommand{\emp}{\ensuremath{{\varnothing}}}
\newcommand{\sscal}[2]{\langle{#1}\mid {#2}\rangle} 
\newcommand{\scal}[2]{\left\langle{#1}\mid {#2} \right\rangle} 
\newcommand{\pscal}[2]{\langle\langle{#1}\mid{#2}\rangle\rangle} 
\newcommand{\psscal}[2]{\langle\langle\langle{#1}\mid{#2}%
\rangle\rangle\rangle} 
\newcommand{\infconv}{\ensuremath{\mbox{\footnotesize$\,\square\,$}}}
\newcommand{\exi}{\ensuremath{\exists\,}}
\newcommand{\zeroun}{\ensuremath{\left]0,1\right[}}   
\newcommand{\HH}{\ensuremath{\mathcal H}}
\newcommand{\GG}{\ensuremath{\mathcal G}}
\newcommand{\BL}{\ensuremath{\EuScript B}\,}
\newcommand{\SL}{\ensuremath{\EuScript S}\,}
\newcommand{\BP}{\ensuremath{\EuScript P}}
\newcommand{\KKK}{\ensuremath{\boldsymbol{\mathcal K}}}
\newcommand{\GGG}{\ensuremath{\boldsymbol{\mathcal G}}}
\newcommand{\sri}{\ensuremath{\operatorname{sri}}}
\newcommand{\reli}{\ensuremath{\operatorname{ri}\,}}
\newcommand{\RR}{\ensuremath{\mathbb R}}
\newcommand{\RP}{\ensuremath{\left[0,+\infty\right[}}
\newcommand{\RPP}{\ensuremath{\,\left]0,+\infty\right[}}
\newcommand{\RX}{\ensuremath{\,\left]-\infty,+\infty\right]}}
\newcommand{\RM}{\ensuremath{\,\left]-\infty,0\right]}}
\newcommand{\RXX}{\ensuremath{\left[-\infty,+\infty\right]}}
\newcommand{\NN}{\ensuremath{\mathbb N}}
\newcommand{\dom}{\ensuremath{\operatorname{dom}}}
\newcommand{\prox}{\ensuremath{\operatorname{prox}}}
\newcommand{\inte}{\ensuremath{\operatorname{int}}}
\newcommand{\cart}{\ensuremath{\mbox{\huge{$\times$}}}}
\newcommand{\RPX}{\ensuremath{{[0,\pinf]}}}
\newcommand{\Argmin}{\ensuremath{\operatorname{Argmin}}}
\newcommand{\argmin}{\ensuremath{\operatorname{argmin}}}
\newcommand{\ran}{\ensuremath{\operatorname{ran}}}
\newcommand{\zer}{\ensuremath{\operatorname{zer}}}
\newcommand{\gra}{\ensuremath{\operatorname{gra}}}
\newcommand{\cone}{\ensuremath{\operatorname{cone}}}
\newcommand{\spc}{\ensuremath{\overline{\operatorname{span}}\,}}
\newcommand{\Id}{\ensuremath{\operatorname{Id}}}
\newcommand{\weakly}{\ensuremath{\rightharpoonup}}
\newcommand{\minf}{\ensuremath{-\infty}}
\newcommand{\pinf}{\ensuremath{+\infty}}
\newtheorem{theorem}{Theorem}[section]
\newtheorem{lemma}[theorem]{Lemma}
\newtheorem{corollary}[theorem]{Corollary}
\newtheorem{proposition}[theorem]{Proposition}
\newtheorem{definition}[theorem]{Definition}
\theoremstyle{plain}{\theorembodyfont{\rmfamily}
}
\theoremstyle{plain}{\theorembodyfont{\rmfamily}
}
\theoremstyle{plain}{\theorembodyfont{\rmfamily}
}
\theoremstyle{plain}{\theorembodyfont{\rmfamily}
\newtheorem{example}[theorem]{Example}}
\theoremstyle{plain}{\theorembodyfont{\rmfamily}
\newtheorem{problem}[theorem]{Problem}}
\theoremstyle{plain}{\theorembodyfont{\rmfamily}
\newtheorem{remark}[theorem]{Remark}}
\theoremstyle{plain}{\theorembodyfont{\rmfamily}
\newtheorem{notation}[theorem]{Notation}}
\numberwithin{equation}{section}
\begin{document}
\title{\sffamily Variable Metric Forward-Backward Splitting 
with Applications to Monotone Inclusions in 
Duality\thanks{Contact author: 
P. L. Combettes, {\ttfamily{plc@math.jussieu.fr}},
phone:+33 1 4427 6319, fax:+33 1 4427 7200.
The work of B$\grave{\text{\u{a}}}$ng C\^ong V\~u was supported 
by the Vietnam National Foundation for Science and Technology 
Development.}}
\author{Patrick L. Combettes and 
B$\grave{\text{\u{a}}}$ng C\^ong V\~u\\[4mm]
\small UPMC Universit\'e Paris 06\\
\small Laboratoire Jacques-Louis Lions-- UMR CNRS 7598\\
\small 75005 Paris, France\\
\small{\ttfamily{plc@math.jussieu.fr}}, 
{\ttfamily{vu@ljll.math.upmc.fr}} 
}
\bigskip

\date{~}
\maketitle

\begin{abstract}
We propose a variable metric forward-backward splitting algorithm
and prove its convergence in real Hilbert spaces. We then use this
framework to derive primal-dual splitting algorithms for solving 
various classes of monotone inclusions in duality. Some of these
algorithms are new even when specialized to the fixed metric case. 
Various applications are discussed.
\end{abstract}

{\bfseries Keywords}: 
cocoercive operator,
composite operator,
demiregularity,
duality,
forward-backward splitting algorithm,
monotone inclusion,
monotone operator,
primal-dual algorithm,
quasi-Fej\'er sequence,
variable metric.

{\bf Mathematics Subject Classifications (2010)} 
47H05, 49M29, 49M27, 90C25 

\section{Introduction} 

The forward-backward algorithm has a long history going back to 
the projected gradient method (see \cite{Sico10,Opti04} for
historical background). It addresses the problem of finding a zero
of the sum of two operators acting on a real Hilbert space $\HH$, 
namely, 
\begin{equation}
\label{e:A+B}
\text{find}\;\;x\in\HH\quad\text{such that}\quad 
0\in Ax+Bx,
\end{equation}
under the assumption that $A\colon\HH\to 2^{\HH}$ is maximally 
monotone and that $B\colon\HH\to\HH$ is $\beta$-cocoercive for some 
$\beta\in\RPP$, i.e. \cite{Livre1},
\begin{equation}
\label{e:cocoercive}
(\forall x\in\HH)(\forall y\in\HH)\quad
\scal{x-y}{Bx-By}\geq\beta\|Bx-By\|^2.
\end{equation}
This framework is quite central due to the large class of problems 
it encompasses in areas such as partial differential equations, 
mechanics, evolution inclusions, signal and image processing, 
best approximation, convex optimization, learning theory, inverse 
problems, statistics, game theory, and variational inequalities 
\cite{Sico10,Livre1,Bric13,Chen97,Opti04,Banf11,Smms05,Devi11,%
Duch09,Facc03,Glow89,Merc79,Merc80,Tsen90,Tsen91,Zhud96}. 
The forward-backward algorithm operates according to the routine 
\begin{equation}
\label{e:2012-05-13}
x_0\in\HH\quad\text{and}\quad
(\forall n\in\NN)\quad x_{n+1}=(\Id+\gamma_nA)^{-1}
(x_n-\gamma_nBx_n),\quad\text{where}\quad 0<\gamma_n<2\beta.
\end{equation}
In classical optimization methods, the benefits of changing the 
underlying metric over the course of the iterations to improve 
convergence profiles has long been recognized \cite{Davi59,Pear69}.
In proximal methods, variable metrics have been investigated
mostly when $B=0$ in \eqref{e:A+B}. In such instances 
\eqref{e:2012-05-13} reduces to the proximal point algorithm 
\begin{equation}
\label{e:2012-05-14}
x_0\in\HH\quad\text{and}\quad
(\forall n\in\NN)\quad x_{n+1}=(\Id+\gamma_nA)^{-1}x_n,
\quad\text{where}\quad
\gamma_n>0.
\end{equation}
In the case when $A$ is the subdifferential of a real-valued
convex function in a finite dimensional setting, variable 
metric versions of \eqref{e:2012-05-14} have been proposed in
\cite{Bonn95,Chen99,Lema97,Qi95}. These methods draw heavily 
on the fact that the proximal point algorithm for minimizing
a function corresponds to the gradient descent method applied 
to its Moreau envelope.
In the same spirit, variable metric proximal point algorithms 
for a general maximally monotone operator $A$ were considered in 
\cite{Burk99,Qian92}. In \cite{Burk99}, superlinear 
convergence rates were shown to be achievable under suitable 
hypotheses (see also \cite{Burk00} for further developments).
The finite dimensional variable metric proximal point algorithm 
proposed in \cite{Pare08} allows for errors in the proximal steps
and features a flexible class of exogenous metrics to implement 
the algorithm. The first variable metric forward-backward algorithm
appears to be that introduced in \cite[Section~5]{Chen97}. It 
focuses on linear convergence results in the case when 
$A+B$ is strongly monotone and $\HH$ is finite-dimensional.
The variable metric splitting algorithm of \cite{Loli09} provides a 
framework which can be used to solve \eqref{e:A+B} in instances 
when $\HH$ is finite-dimensional and $B$ is merely Lipschitzian. 
However, it does not exploit the cocoercivity property 
\eqref{e:cocoercive} and it is more cumbersome to implement 
than the forward-backward iteration. Let us 
add that, in the important case when $B$ is the gradient of a 
convex function, the Baillon-Haddad theorem asserts that the 
notions of cocoercivity and Lipschitz-continuity coincide 
\cite[Corollary~18.16]{Livre1}.

The goal of this paper is two-fold. First, we propose a general
purpose variable metric forward-backward algorithm to solve
\eqref{e:A+B}--\eqref{e:cocoercive} in Hilbert spaces and analyze
its asymptotic behavior, both in terms of weak and strong
convergence. Second, we show that this algorithm can be used to 
solve a broad class of composite monotone inclusion problems
in duality by formulating them as instances of 
\eqref{e:A+B}--\eqref{e:cocoercive} in alternate Hilbert spaces. 
Even when restricted to the constant metric case, some of these
results are new.

The paper is organized as follows. Section~\ref{sec:2} is 
devoted to notation and background. In Section~\ref{sec:3},
we provide preliminary results. The variable metric forward-backward 
algorithm is introduced and analyzed in Section~\ref{sec:4}.
In Section~\ref{sec:5}, we present a new variable metric 
primal-dual splitting algorithm for strongly monotone composite 
inclusions. This algorithm is obtained by applying the
forward-backward algorithm of Section~\ref{sec:4} to the dual
inclusion. In Section~\ref{sec:6}, we consider a more
general class of composite inclusions in duality and show that
they can be solved by applying the
forward-backward algorithm of Section~\ref{sec:4} to 
a certain inclusion problem posed in the primal-dual product space.
Applications to minimization problems, variational inequalities,
and best approximation are discussed. 

\section{Notation and background}
\label{sec:2}

We recall some notation and background from convex analysis and 
monotone operator theory (see \cite{Livre1} for a detailed account).

Throughout, $\HH$, $\GG$, and $(\GG_i)_{1\leq i\leq m}$ are real 
Hilbert spaces. We denote the scalar product of a Hilbert space by 
$\scal{\cdot}{\cdot}$ and the associated norm by $\|\cdot\|$. 
The symbols $\weakly$ and $\to$ denote respectively weak and strong 
convergence, and $\Id$ denotes the identity operator.
We denote by $\BL(\HH,\GG)$ the space of bounded linear operators 
from $\HH$ to $\GG$, we set $\BL(\HH)=\BL(\HH,\HH)$ and
$\SL(\HH)=\menge{L\in\BL(\HH)}{L=L^*}$, where $L^*$ denotes the
adjoint of $L$. The Loewner partial ordering on $\SL(\HH)$ is 
defined by
\begin{equation}
\label{e:loewner}
(\forall U\in\SL(\HH))(\forall V\in\SL(\HH))\quad
U\succcurlyeq V\quad\Leftrightarrow\quad(\forall x\in\HH)\quad
\scal{Ux}{x}\geq\scal{Vx}{x}.
\end{equation}
Now let $\alpha\in\RP$. We set
\begin{equation}
\BP_{\alpha}(\HH)=\menge{U\in\SL(\HH)}{U\succcurlyeq\alpha\Id},
\end{equation}
and we denote by $\sqrt{U}$ the square root of 
$U\in\BP_{\alpha}(\HH)$. Moreover, for every 
$U\in\BP_\alpha(\HH)$, we define a semi-scalar product and 
a semi-norm (a scalar product and a norm if $\alpha>0$) by
\begin{equation}
\label{Unorm}
(\forall x\in\HH)(\forall y\in\HH)\quad\scal{x}{y}_U=\scal{Ux}{y}
\quad\text{and}\quad\|x\|_U=\sqrt{\scal{Ux}{x}}.
\end{equation}

\begin{notation}
\label{n:palawan-mai2008-}
We denote by $\GGG=\GG_1\oplus\cdots\oplus\GG_m$ the Hilbert direct 
sum of the Hilbert spaces $(\GG_i)_{1\leq i\leq m}$, i.e., their 
product space equipped with the scalar product and the associated 
norm respectively defined by
\begin{equation}
\label{e:palawan-mai2008-}
\pscal{\cdot}{\cdot}
\colon(\boldsymbol{x},\boldsymbol{y})\mapsto
\sum_{i=1}^m\scal{x_i}{y_i}
\quad\text{and}\quad|||\cdot|||\colon
\boldsymbol{x}\mapsto\sqrt{\sum_{i=1}^m\|x_i\|^2},
\end{equation}
where ${\boldsymbol x}=(x_i)_{1\leq i\leq m}$ and 
${\boldsymbol y}=(y_i)_{1\leq i\leq m}$ denote generic elements in
$\GGG$.
\end{notation}

Let $A\colon\HH\to 2^{\HH}$ be a set-valued operator.
The domain and the graph of $A$ are respectively defined by 
$\dom A=\menge{x\in\HH}{Ax\neq\emp}$ and 
$\gra A=\menge{(x,u)\in\HH\times\HH}{u\in Ax}$.
We denote by $\zer A=\menge{x\in\HH}{0\in Ax}$ the set of zeros 
of $A$ and by 
$\ran A=\menge{u\in\HH}{(\exists\; x\in\HH)\;u\in Ax}$ 
the range of $A$. The inverse of $A$ is
$A^{-1}\colon\HH\mapsto 2^{\HH}\colon u\mapsto 
\menge{x\in\HH}{u\in Ax}$, and the resolvent of $A$ is
\begin{equation}
\label{e:resolvent}
J_A=(\Id+A)^{-1}.
\end{equation}
Moreover, $A$ is monotone if 
\begin{equation}
(\forall(x,y)\in\HH\times\HH)
(\forall(u,v)\in Ax\times Ay)\quad\scal{x-y}{u-v}\geq 0,
\end{equation}
and maximally monotone if it is monotone and there exists no 
monotone operator 
$B\colon\HH\to2^\HH$ such that $\gra A\subset\gra B$ and $A\neq B$.
The parallel sum of $A$ and $B\colon\HH\to 2^{\HH}$ is 
\begin{equation}
\label{e:parasum}
A\infconv B=(A^{-1}+ B^{-1})^{-1}.
\end{equation} 
The conjugate of $f\colon\HH\to\RX$ is 
\begin{equation}
\label{e:conjugate}
f^*\colon\HH\to\RXX\colon 
u\mapsto \sup_{x\in\HH}\big(\scal{x}{u}-f(x)\big), 
\end{equation}
and the infimal convolution of $f$ with $g\colon\HH\to\RX$ is
\begin{equation}
f\infconv g\colon\HH\to\RXX\colon x\mapsto
\inf_{y\in\HH}\big(f(y)+g(x-y)\big).
\end{equation}
The class of lower semicontinuous convex functions $f\colon\HH\to\RX$
such that $\dom f=\menge{x\in\HH}{f(x)<\pinf}\neq\emp$ is denoted by 
$\Gamma_0(\HH)$. If $f\in\Gamma_0(\HH)$, then $f^*\in\Gamma_0(\HH)$
and the subdifferential of $f$ is the maximally monotone operator
\begin{equation}
\partial f\colon\HH\to 2^{\HH}\colon x
\mapsto\menge{u\in\HH}{(\forall y\in\HH)\;
\scal{y-x}{u}+f(x)\leq f(y)}
\end{equation} 
with inverse $(\partial f)^{-1}=\partial f^*$.
Let $C$ be a nonempty subset of $\HH$. 
The indicator function and the distance
function of $C$ are defined on $\HH$ as 
\begin{equation}
\iota_C\colon x\mapsto
\begin{cases}
0,&\text{if}\;\;x\in C;\\
\pinf,&\text{if}\;\;x\notin C
\end{cases}
\qquad\text{and}\quad
d_C=\iota_C\infconv\|\cdot\|\colon x\mapsto\inf_{y\in C}\|x-y\|,
\end{equation}
respectively, the interior of $C$ is $\inte C$,
and the support function of $C$ is 
$\sigma_C=\iota_C^*$. Now suppose that $C$ is convex. The normal 
cone operator of $C$ is
\begin{equation}
\label{e:normalcone}
N_C=\partial\iota_C\colon\HH\to 2^{\HH}\colon x\mapsto
\begin{cases}
\menge{u\in\HH}{(\forall y\in C)\;\;\scal{y-x}{u}\leq 0},
&\text{if}\;x\in C;\\
\emp,&\text{otherwise,}
\end{cases}
\end{equation}
and the strong relative interior of $C$, i.e., the set of points 
$x\in C$ such that the conical hull of $-x+C$ is a closed vector 
subspace of $\HH$, is denoted by $\sri C$; if $\HH$ is 
finite-dimensional, $\sri C$ coincides with the relative interior 
of $C$, denoted by $\reli C$. If $C$ is also closed, its projector 
is denoted by $P_C$, i.e.,
$P_C\colon\HH\to C\colon x\mapsto{\argmin}_{y\in C}\|x-y\|$.

Finally, $\ell_+^1(\NN)$ denotes the set of summable sequences in 
$\RP$.

\section{Preliminary results}
\label{sec:3}

\subsection{Technical results}

The following properties can be found in 
\cite[Section~VI.2.6]{Kato80} (see \cite[Lemma~2.1]{Guad2012} for 
an alternate short proof).

\begin{lemma}
\label{l:kjMMXII}
Let $\alpha\in\RPP$, let $\mu\in\RPP$, and let $A$ and $B$ be 
operators in $\SL(\HH)$ such that 
$\mu\Id\succcurlyeq A\succcurlyeq B\succcurlyeq\alpha\Id$. Then
the following hold.
\begin{enumerate}
\item
\label{l:kjMMXII-i}
$\alpha^{-1}\Id\succcurlyeq B^{-1}\succcurlyeq A^{-1}\succcurlyeq
\mu^{-1}\Id$. 
\item
\label{l:kjMMXII-ii}
$(\forall x\in\HH)$ $\scal{A^{-1}x}{x}\geq\|A\|^{-1}\|x\|^2$.
\item
\label{l:kjMMXII-iii}
$\|A^{-1}\|\leq\alpha^{-1}$.
\end{enumerate}
\end{lemma}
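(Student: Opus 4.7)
The plan is to reduce everything to the single fact that inversion reverses the Loewner ordering on the cone of positive-definite self-adjoint operators, and then to read off (ii) and (iii) as direct corollaries of (i).

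For (i), my first step will be to verify that $A$ and $B$ are both invertible with bounded inverses: since $A \succcurlyeq B \succcurlyeq \alpha\Id$ with $\alpha > 0$, each of them is bounded below on $\HH$, hence injective with closed range, and symmetry forces the range to be all of $\HH$. In particular, the square root $\sqrt{B} \in \SL(\HH)$ provided by the paper's notation exists and is itself invertible. The key move is then the standard sandwich argument: from $A \succcurlyeq B$ and $B^{-1/2} \in \SL(\HH)$, conjugation yields $B^{-1/2} A B^{-1/2} \succcurlyeq \Id$; applying the scalar fact $t \geq 1 \Rightarrow t^{-1} \leq 1$ through the continuous functional calculus on the self-adjoint operator $B^{-1/2}AB^{-1/2}$ gives $B^{1/2}A^{-1}B^{1/2} \preccurlyeq \Id$; conjugating back by $B^{-1/2}$ produces $A^{-1} \preccurlyeq B^{-1}$. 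The remaining two inequalities in (i), namely $B^{-1} \preccurlyeq \alpha^{-1}\Id$ and $\mu^{-1}\Id \preccurlyeq A^{-1}$, are the same argument applied to the pairs $(B,\alpha\Id)$ and $(\mu\Id,A)$.

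For (ii), I will use that for a positive self-adjoint operator, $\scal{Ax}{x} \leq \|A\|\,\|x\|^2$, i.e. $A \preccurlyeq \|A\|\Id$; since $\|A\| \geq \alpha > 0$, this upper bound is itself positive definite and the sandwich argument of (i) applies to yield $A^{-1} \succcurlyeq \|A\|^{-1}\Id$, which is precisely the claim once unfolded via the definition of $\|\cdot\|_U$--style scalar products. For (iii), the inequality $A \succcurlyeq \alpha\Id$ combined with (i) gives $A^{-1} \preccurlyeq \alpha^{-1}\Id$; since $A^{-1}$ is self-adjoint and positive, its operator norm is attained as $\sup_{\|x\|\leq 1}\scal{A^{-1}x}{x}$, which is bounded above by $\alpha^{-1}$.

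The only real obstacle is the first one: pinning down the sandwich/conjugation argument cleanly requires the existence of the square root of $B$ (and the fact that it commutes appropriately with $B$), and the passage $T \succcurlyeq \Id \Rightarrow T^{-1} \preccurlyeq \Id$ for the self-adjoint operator $T = B^{-1/2}AB^{-1/2}$. Both are standard via the spectral theorem, so once that lemma is invoked the rest is a short, essentially algebraic unwinding; (ii) and (iii) will each be one-liners off the back of (i).
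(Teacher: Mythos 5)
Your proof is correct. Note, however, that the paper does not actually prove this lemma: it is stated as a known fact with references to \cite[Section~VI.2.6]{Kato80} and \cite[Lemma~2.1]{Guad2012}, so there is no in-paper argument to compare against. Your route --- invertibility from the lower bound $B\succcurlyeq\alpha\Id$ (bounded below, closed range, dense range by self-adjointness), then the conjugation/sandwich argument $A\succcurlyeq B\Rightarrow B^{-1/2}AB^{-1/2}\succcurlyeq\Id\Rightarrow B^{1/2}A^{-1}B^{1/2}\preccurlyeq\Id\Rightarrow A^{-1}\preccurlyeq B^{-1}$, with (ii) obtained by running the same argument on the pair $(\|A\|\Id,A)$ and (iii) read off from $A^{-1}\preccurlyeq\alpha^{-1}\Id$ together with $\|A^{-1}\|=\sup_{\|x\|\leq 1}\scal{A^{-1}x}{x}$ --- is the standard operator-monotonicity proof and all steps are justified. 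Two minor remarks: (ii) and (iii) also admit one-line direct proofs that bypass the functional calculus entirely (for (iii), substitute $y=A^{-1}x$ into $\scal{Ay}{y}\geq\alpha\|y\|^2$ to get $\alpha\|A^{-1}x\|^2\leq\scal{x}{A^{-1}x}\leq\|x\|\,\|A^{-1}x\|$; for (ii), apply the Cauchy--Schwarz inequality for the semi-scalar product $(u,v)\mapsto\scal{Au}{v}$ to $u=A^{-1}x$, $v=x$, giving $\|x\|^4=\scal{A(A^{-1}x)}{x}^2\leq\scal{A^{-1}x}{x}\,\scal{Ax}{x}\leq\scal{A^{-1}x}{x}\,\|A\|\,\|x\|^2$). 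These avoid the square root and spectral theorem, but your argument is equally valid.
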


The next fact concerns sums of composite cocoercive operators.
\begin{proposition} 
\label{p:samurai11}
Let $I$ be a finite index set. For every $i\in I$, let 
$0\neq L_i\in\BL(\HH,\GG_i)$, let $\beta_i\in\RPP$, and let
$T_i\colon\GG_i\to\GG_i$ be $\beta_i$-cocoercive. Set 
$T=\sum_{i\in I}L_i^*T_iL_i$ and 
$\beta=1/\big(\sum_{i\in I}\|L_i\|^2/\beta_i\big)$.
Then $T$ is $\beta$-cocoercive.
\end{proposition}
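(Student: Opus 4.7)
The plan is to verify the cocoercivity inequality $\scal{x-y}{Tx-Ty} \geq \beta\|Tx-Ty\|^2$ directly, by first lower-bounding the left side using the cocoercivity of each $T_i$ and then upper-bounding $\|Tx-Ty\|^2$ by a weighted Cauchy--Schwarz argument that produces exactly the constant $\beta^{-1} = \sum_i \|L_i\|^2/\beta_i$.

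First I would fix $x,y\in\HH$ and set $u_i = T_iL_ix - T_iL_iy \in\GG_i$ for each $i\in I$, so that $Tx-Ty = \sum_{i\in I} L_i^* u_i$. Expanding the scalar product and using adjointness,
\begin{equation*}
\scal{x-y}{Tx-Ty} = \sum_{i\in I}\scal{L_i(x-y)}{u_i} \geq \sum_{i\in I}\beta_i\|u_i\|^2,
\end{equation*}
where the inequality is the $\beta_i$-cocoercivity of $T_i$ applied at $L_ix$ and $L_iy$.

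The key step, which is where the main work lies, is to bound $\|Tx-Ty\|^2 = \|\sum_i L_i^* u_i\|^2$ from above by $\beta^{-1}\sum_i \beta_i\|u_i\|^2$. For this I would test against an arbitrary $v\in\HH$ and write $\scal{v}{\sum_i L_i^*u_i} = \sum_i\scal{L_iv}{u_i}$, then split each term as $(\|L_iv\|/\sqrt{\beta_i})\cdot(\sqrt{\beta_i}\|u_i\|)$ and apply the discrete Cauchy--Schwarz inequality to obtain
\begin{equation*}
\scal{v}{Tx-Ty} \leq \sqrt{\sum_{i\in I}\frac{\|L_iv\|^2}{\beta_i}}\,\sqrt{\sum_{i\in I}\beta_i\|u_i\|^2} \leq \|v\|\sqrt{\sum_{i\in I}\frac{\|L_i\|^2}{\beta_i}}\,\sqrt{\sum_{i\in I}\beta_i\|u_i\|^2}.
\end{equation*}
Taking the supremum over $v$ in the unit ball and squaring gives $\|Tx-Ty\|^2 \leq \beta^{-1}\sum_i\beta_i\|u_i\|^2$.

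Combining the two bounds yields $\scal{x-y}{Tx-Ty}\geq\sum_i\beta_i\|u_i\|^2\geq\beta\|Tx-Ty\|^2$, which is the claim. The main obstacle is choosing the right weights $\sqrt{\beta_i}$ versus $1/\sqrt{\beta_i}$ in the Cauchy--Schwarz step so that the constant comes out to exactly $\sum_i\|L_i\|^2/\beta_i$; once this splitting is identified, everything else is essentially bookkeeping. No further ingredients beyond cocoercivity of the $T_i$, adjointness of $L_i^*$, and the operator-norm estimate $\|L_iv\|\leq\|L_i\|\|v\|$ are needed.
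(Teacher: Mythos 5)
Your proof is correct: the opening inequality (cocoercivity of each $T_i$ transported through $L_i$ via adjointness) is exactly the paper's first step, and your bound $\|Tx-Ty\|^2\leq\beta^{-1}\sum_{i\in I}\beta_i\|u_i\|^2$ is valid with the constant matching $\beta^{-1}=\sum_{i\in I}\|L_i\|^2/\beta_i$ precisely. Where you differ from the paper is in how that last bound is obtained. You dualize: you pair $\sum_{i}L_i^*u_i$ against a test vector $v$, split each term as $(\|L_iv\|/\sqrt{\beta_i})\cdot(\sqrt{\beta_i}\|u_i\|)$, apply the discrete Cauchy--Schwarz inequality, and take a supremum over the unit ball. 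The paper stays on the primal side: it first uses $\|L_i^*w\|\leq\|L_i\|\,\|w\|$ termwise to convert $\beta_i\|u_i\|^2$ into $(\beta_i/\|L_i\|^2)\|L_i^*u_i\|^2$, then writes $\sum_i L_i^*u_i=\sum_i\alpha_i\big(\alpha_i^{-1}L_i^*u_i\big)$ with the normalized weights $\alpha_i=\beta\|L_i\|^2/\beta_i$ (which sum to $1$) and invokes convexity of $\|\cdot\|^2$, i.e.\ Jensen's inequality. The two devices are equivalent --- weighted Cauchy--Schwarz for finite sums is Jensen for the squared norm in disguise --- and both hinge on the same choice of weights, so neither buys extra generality; your version arguably makes it more transparent why the harmonic-mean-type constant $\beta$ appears, while the paper's avoids introducing a supremum over test vectors. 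In both arguments the hypothesis $L_i\neq 0$ is what licenses dividing by $\|L_i\|$ (respectively by $\alpha_i$).
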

\begin{proof}
Set $(\forall i\in I)$ $\alpha_i=\beta\|L_i\|^2/\beta_i$. Then 
$\sum_{i\in I}\alpha_i=1$ and, by convexity of $\|\cdot\|^2$ and 
\eqref{e:cocoercive}, 
\begin{align}
(\forall x\in\HH)(\forall y\in\HH)\quad
\scal{x-y}{Tx-Ty}
&=\sum_{i\in I}\scal{x-y}{L_i^*T_iL_ix-L_i^*T_iL_iy}\nonumber\\
&=\sum_{i\in I}\scal{L_ix-L_iy}{T_iL_ix-T_iL_iy}\nonumber\\
&\geq\sum_{i\in I}\beta_i\|T_iL_ix-T_iL_iy\|^2\nonumber\\
&\geq\sum_{i\in I}\frac{\beta_i}{\|L_i\|^2}
\|L_i^*T_iL_ix-L_i^*T_iL_iy\|^2\nonumber\\
&=\beta\sum_{i\in I}\alpha_i\Big\|\frac{1}{\alpha_i}
(L_i^*T_iL_ix-L_i^*T_iL_iy)\Big\|^2\nonumber\\
&\geq\beta\Big\|\sum_{i\in I}(L_i^*T_iL_ix-L_i^*T_iL_iy)\Big\|^2
\nonumber\\
&=\beta\|Tx-Ty\|^2,
\end{align}
which concludes the proof.
\end{proof}

\subsection{Variable metric quasi-Fej\'er sequences}

The following results are from \cite{Guad2012}.

\begin{proposition}
\label{p:1} 
Let $\alpha\in\RPP$, let $(W_n)_{n\in\NN}$ be in 
$\BP_{\alpha}(\HH)$, let $C$ be a nonempty subset of $\HH$, 
and let $(x_n)_{n\in\NN}$ be a sequence in $\HH$ such that 
\begin{multline}
\label{e:vmqf1}
\big(\exi(\eta_n)_{n\in\NN}\in\ell_+^1(\NN)\big)
\big(\forall z\in C\big)\big(\exi(\varepsilon_n)_{n\in\NN}\in
\ell_+^1(\NN)\big)(\forall n\in\NN)\\
\|x_{n+1}-z\|_{W_{n+1}}\leq(1+\eta_n)
\|x_n-z\|_{W_n}+\varepsilon_n.
\end{multline}
Then $(x_n)_{n\in\NN}$ is bounded and, for every $z\in C$,
$(\|x_n-z\|_{W_n})_{n\in\NN}$ converges.
\end{proposition}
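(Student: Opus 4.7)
My plan is to reduce everything to a classical real-variable lemma about nonnegative sequences: if $(a_n)_{n\in\NN}\subset\RP$, $(\eta_n)_{n\in\NN}\in\ell_+^1(\NN)$, $(\varepsilon_n)_{n\in\NN}\in\ell_+^1(\NN)$, and $a_{n+1}\leq(1+\eta_n)a_n+\varepsilon_n$ for every $n$, then $(a_n)_{n\in\NN}$ converges in $\RP$. The standard proof is to normalize by $\tau_n=\prod_{k<n}(1+\eta_k)$; since $\sum_k\eta_k<\pinf$, the infinite product $\prod_k(1+\eta_k)$ converges to a finite limit bounded below by $1$, so $\tau_n\in[1,\tau_\infty]$ and $\tau_n\to\tau_\infty\in\RPP$. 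Setting $b_n=a_n/\tau_n$, the recursion becomes $b_{n+1}\leq b_n+\varepsilon_n/\tau_{n+1}\leq b_n+\varepsilon_n$, and then the shifted sequence $b_n+\sum_{k\geq n}\varepsilon_k$ is nonincreasing and nonnegative, hence convergent; subtracting the (convergent) tail of a summable series shows $(b_n)$ converges, and finally $a_n=\tau_n b_n$ converges.

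The rest of the argument is then immediate. Fix $z\in C$ and apply the lemma with $a_n=\|x_n-z\|_{W_n}$ and the $(\varepsilon_n)_{n\in\NN}$ associated with $z$ by hypothesis \eqref{e:vmqf1}; this yields convergence of $(\|x_n-z\|_{W_n})_{n\in\NN}$, which is the second assertion. Note that the fact that $(\eta_n)_{n\in\NN}$ is fixed and only $(\varepsilon_n)_{n\in\NN}$ depends on $z$ is irrelevant here, since the lemma only needs both sequences to be summable for the single $z$ under consideration.

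For the first assertion, I would fix an arbitrary $z_0\in C$. By the previous step, $(\|x_n-z_0\|_{W_n})_{n\in\NN}$ is convergent, hence bounded, say by some $M\in\RP$. Because $W_n\succcurlyeq\alpha\Id$ with $\alpha\in\RPP$, the definition \eqref{Unorm} gives
\begin{equation}
\|x_n-z_0\|^2\leq\alpha^{-1}\scal{W_n(x_n-z_0)}{x_n-z_0}
=\alpha^{-1}\|x_n-z_0\|_{W_n}^2\leq\alpha^{-1}M^2,
\end{equation}
so $(x_n)_{n\in\NN}$ is bounded.

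There is no serious obstacle; the only point that requires a moment of care is the verification that the elementary scalar recursion really does force convergence (not just boundedness), which is why the normalization by $\tau_n$ and the telescoping/monotone argument above are needed. Everything else is a direct translation between the $W_n$-seminorm and the ambient norm, made possible by the uniform lower bound $W_n\succcurlyeq\alpha\Id$.
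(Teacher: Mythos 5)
Your proof is correct. The paper itself gives no argument for this proposition --- it is imported verbatim from the reference \cite{Guad2012} --- and your reduction to the scalar recursion $a_{n+1}\leq(1+\eta_n)a_n+\varepsilon_n$, handled by normalizing with $\tau_n=\prod_{k<n}(1+\eta_k)$ and a telescoping/monotone tail argument, together with the passage from the $W_n$-norms to the ambient norm via $W_n\succcurlyeq\alpha\Id$, is exactly the standard quasi-Fej\'er argument used there; the quantifier order ($\eta_n$ fixed, $\varepsilon_n$ depending on $z$) is also handled correctly.
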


\begin{proposition}
\label{p:guad1} 
Let $\alpha\in\RPP$, and let $(W_n)_{n\in\NN}$ and $W$ be 
operators in $\BP_{\alpha}(\HH)$ such that $W_n\to W$ pointwise, 
as is the case when 
\begin{equation}
\sup_{n\in\NN}\|W_n\|<\pinf\quad\text{and}\quad
(\exi (\eta_n)_{n\in\NN}\in\ell_+^1(\NN))
(\forall n\in\NN)\quad (1+\eta_n)W_n\succcurlyeq W_{n+1}.
\end{equation}
Let $C$ be a nonempty subset of 
$\HH$, and let $(x_n)_{n\in\NN}$ be a sequence in $\HH$ such 
that \eqref{e:vmqf1} is satisfied. Then 
$(x_n)_{n\in\NN}$ converges weakly to a point in $C$ 
if and only if every weak sequential cluster point of 
$(x_n)_{n\in\NN}$ is in $C$.
\end{proposition}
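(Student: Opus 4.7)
The plan is to treat the two implications separately, the forward direction being essentially trivial, and the reverse direction following the classical Opial/Fej\'er uniqueness scheme, carefully adapted to the variable metric setting.

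The \emph{only if} direction is immediate: if $x_n\weakly\bar x$, then $\bar x$ is the unique weak sequential cluster point, and the hypothesis forces $\bar x\in C$. For the \emph{if} direction, I first invoke Proposition~\ref{p:1} to obtain that $(x_n)_{n\in\NN}$ is bounded and that, for every $z\in C$, the sequence $\bigl(\|x_n-z\|_{W_n}\bigr)_{n\in\NN}$ converges, say to some $\ell(z)\in\RP$. Boundedness guarantees the existence of at least one weak sequential cluster point, which by assumption lies in $C$, so only uniqueness remains. Suppose for contradiction that there are two distinct weak cluster points $x,y\in C$ with corresponding subsequences $x_{k_n}\weakly x$ and $x_{l_n}\weakly y$.

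The key computational step is the algebraic identity
\begin{equation*}
\|x_n-y\|_{W_n}^2-\|x_n-x\|_{W_n}^2
=2\scal{x_n}{W_n(x-y)}+\scal{W_ny}{y}-\scal{W_nx}{x},
\end{equation*}
which follows from expansion and the self-adjointness of $W_n$. By Proposition~\ref{p:1} the left-hand side converges to $\ell(y)^2-\ell(x)^2$, which is independent of the subsequence used. I then pass to the limit on the right along each of the two subsequences: the terms $\scal{W_ny}{y}$ and $\scal{W_nx}{x}$ converge to $\scal{Wy}{y}$ and $\scal{Wx}{x}$ respectively by pointwise convergence $W_n\to W$, while the mixed term requires weak--strong convergence, giving $\scal{x_{k_n}}{W_{k_n}(x-y)}\to\scal{x}{W(x-y)}$ and $\scal{x_{l_n}}{W_{l_n}(x-y)}\to\scal{y}{W(x-y)}$. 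Subtracting the two resulting identities cancels the two $W$-quadratic terms and yields $2\|x-y\|_W^2=0$, whence $x=y$ because $W\succcurlyeq\alpha\Id$ with $\alpha>0$---the desired contradiction.

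The main obstacle is the passage to the limit in the mixed product $\scal{x_{k_n}}{W_{k_n}(x-y)}$: it hinges on deducing the strong convergence $W_{k_n}(x-y)\to W(x-y)$ from the pointwise convergence $W_n\to W$, combined with the classical fact that weak times strong convergence yields scalar convergence. As for the parenthetical sufficient condition, pointwise convergence of $(W_n)_{n\in\NN}$ under $\sup_n\|W_n\|<\pinf$ and $(1+\eta_n)W_n\succcurlyeq W_{n+1}$ can be justified by noting that, for each $x\in\HH$, the sequence $\bigl(\prod_{k<n}(1+\eta_k)^{-1}\scal{W_nx}{x}\bigr)_{n\in\NN}$ is monotone and bounded, hence convergent, and then using polarization together with a standard uniform boundedness argument to upgrade this to pointwise convergence of $W_n$ itself; this auxiliary verification is, however, not essential to the core reasoning.
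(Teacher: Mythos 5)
Your proof is correct and is essentially the argument the paper relies on: the paper does not prove Proposition~\ref{p:guad1} itself but cites \cite{Guad2012}, where the same variable-metric Opial scheme (boundedness and convergence of $\|x_n-z\|_{W_n}$ from Proposition~\ref{p:1}, the expansion of $\|x_n-y\|_{W_n}^2-\|x_n-x\|_{W_n}^2$, and passage to the limit along two subsequences using weak--strong pairing with $W_{k_n}(x-y)\to W(x-y)$) is used. The only imprecision is in your non-essential aside on the sufficient condition: polarization plus uniform boundedness yields only weak operator convergence, and the upgrade to strong pointwise convergence of the normalized decreasing sequence $\widetilde W_n=\prod_{k<n}(1+\eta_k)^{-1}W_n$ requires the inequality $\|(\widetilde W_n-\widetilde W_m)x\|^2\leq\|\widetilde W_n-\widetilde W_m\|\,\scal{(\widetilde W_n-\widetilde W_m)x}{x}$ valid for the positive operator $\widetilde W_n-\widetilde W_m$ ($n\leq m$); as you note, this does not affect the main argument since pointwise convergence is part of the hypothesis.
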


\begin{proposition}
\label{p:monodc} 
Let $\alpha\in\RPP$, let $(W_n)_{n\in\NN}$ be a sequence in 
$\BP_{\alpha}(\HH)$ such that $\sup_{n\in\NN}\|W_n\|<\pinf$, let 
$C$ be a nonempty closed subset of $\HH$, and let 
$(x_n)_{n\in\NN}$ be a sequence in $\HH$ such that 
\begin{multline}
\label{e:vmqf2}
\big(\exi(\varepsilon_n)_{n\in\NN}\in\ell_+^1(\NN)\big)
\big(\exi(\eta_n)_{n\in\NN}\in\ell_+^1(\NN)\big)(\forall z\in C)
(\forall n\in\NN)\\
\|x_{n+1}-z\|_{W_{n+1}}\leq(1+\eta_n)\|x_n-z\|_{W_n}+\varepsilon_n.
\end{multline}
Then $(x_n)_{n\in\NN}$ converges strongly to a point in $C$ if 
and only if $\varliminf d_C(x_n)=0$.
\end{proposition}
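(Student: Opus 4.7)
The forward direction is immediate: if $x_n\to x\in C$ strongly then $d_C(x_n)\leq\|x_n-x\|\to 0$, so certainly $\varliminf d_C(x_n)=0$. The substantive content is the converse, and my plan is to show that $(x_n)_{n\in\NN}$ is a Cauchy sequence in $\HH$, after which the limit can be identified as a point of $C$ by continuity of $d_C$ and closedness of $C$.

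The central tool is the uniform norm equivalence furnished by the assumption: setting $\mu=\sup_{n\in\NN}\|W_n\|<\pinf$, Lemma~\ref{l:kjMMXII} (or a direct spectral argument) yields
\begin{equation}
(\forall n\in\NN)(\forall x\in\HH)\quad\sqrt{\alpha}\,\|x\|\leq\|x\|_{W_n}\leq\sqrt{\mu}\,\|x\|.
\end{equation}
Iterating the quasi-Fej\'er inequality \eqref{e:vmqf2} from index $N$ to $n>N$, and using summability of $(\eta_n)_{n\in\NN}$ to bound $\chi:=\prod_{k\in\NN}(1+\eta_k)<\pinf$, I obtain, for every $z\in C$,
\begin{equation}
\label{e:tele}
\|x_n-z\|_{W_n}\leq\chi\,\|x_N-z\|_{W_N}+\chi\sum_{k\geq N}\varepsilon_k.
\end{equation}
The crucial feature of \eqref{e:tele}, inherited from the quantifier structure of \eqref{e:vmqf2}, is that $(\varepsilon_k)_{k\in\NN}$ does not depend on $z$, so the tail $\sum_{k\geq N}\varepsilon_k$ is small uniformly in the choice of reference point.

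To prove Cauchyness, fix $\varepsilon>0$. Choose $N_1$ such that $\sum_{k\geq N_1}\varepsilon_k<\varepsilon$; using the hypothesis $\varliminf d_C(x_n)=0$, pick $N\geq N_1$ with $d_C(x_N)<\varepsilon$, and then $z\in C$ with $\|x_N-z\|<2\varepsilon$. Applying \eqref{e:tele} and the norm equivalence,
\begin{equation}
(\forall n\geq N)\quad\|x_n-z\|\leq\frac{1}{\sqrt{\alpha}}\,\|x_n-z\|_{W_n}\leq\frac{\chi}{\sqrt{\alpha}}\bigl(2\sqrt{\mu}\,\varepsilon+\varepsilon\bigr),
\end{equation}
so the triangle inequality gives $\|x_n-x_m\|\leq 2\chi\alpha^{-1/2}(2\sqrt{\mu}+1)\varepsilon$ for all $n,m\geq N$. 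Hence $(x_n)_{n\in\NN}$ is Cauchy in $\HH$ and converges strongly to some $x\in\HH$. Finally, $d_C$ is $1$-Lipschitz, so $d_C(x_n)\to d_C(x)$; combining with $\varliminf d_C(x_n)=0$ yields $d_C(x)=0$, and closedness of $C$ gives $x\in C$.

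The main obstacle, and the reason the result is not a verbatim transcription of the constant-metric quasi-Fej\'er argument, is the need to convert between the $W_n$-norms and the ambient norm of $\HH$ while keeping the constants independent of $n$; this is precisely what the uniform bound $\sup_n\|W_n\|<\pinf$ buys, and it is what allows the tails $\sum_{k\geq N}\varepsilon_k$ to control the behavior of the sequence in the fixed norm $\|\cdot\|$.
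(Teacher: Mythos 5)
Your proof is correct, and it follows the standard quasi-Fej\'er argument (uniform equivalence of the $\|\cdot\|_{W_n}$-norms with $\|\cdot\|$, iteration of \eqref{e:vmqf2} with error terms independent of $z$, and a Cauchy estimate anchored at a point of $C$ close to some $x_N$), which is essentially the route taken in the reference \cite{Guad2012} from which the paper quotes this proposition without proof. The one point worth making explicit is that the constant $\chi=\prod_{k\in\NN}(1+\eta_k)$ is finite precisely because $(\eta_n)_{n\in\NN}\in\ell_+^1(\NN)$, and that both $\chi$ and the tails $\sum_{k\geq N}\varepsilon_k$ are independent of $z$ by the quantifier order in \eqref{e:vmqf2} — which you correctly identify as the crux.
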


\begin{proposition}
\label{p:2012-03-26}
Let $\alpha\in\RPP$, let $(\nu_n)_{n\in\NN}\in\ell_+^1(\NN)$, and
let $(W_n)_{n\in\NN}$ be a sequence in $\BP_{\alpha}(\HH)$
such that $\sup_{n\in\NN}\|W_n\|<\pinf$ and
$(\forall n\in\NN)$ $(1+\nu_n)W_{n+1}\succcurlyeq W_n$.
Furthermore, let $C$ be a subset of $\HH$ such that 
$\inte C\neq\emp$, let $z\in C$ and $\rho\in\RPP$ be such that
$B(z;\rho)\subset C$, and let $(x_n)_{n\in\NN}$ be a sequence in 
$\HH$ such that 
\begin{multline}
\label{e:vmqf3}
\big(\exi(\varepsilon_n)_{n\in\NN}\in\ell_+^1(\NN)\big)
\big(\exi(\eta_n)_{n\in\NN}\in\ell_+^1(\NN)\big)
(\forall x\in B(z;\rho))(\forall n\in\NN)\\
\|x_{n+1}-x\|^2_{W_{n+1}}\leq
(1+\eta_n)\|x_n-x\|^2_{W_n}+\varepsilon_n.
\end{multline}
Then $(x_n)_{n\in\NN}$ converges strongly.
\end{proposition}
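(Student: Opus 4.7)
The plan is to identify a strong limit for the auxiliary sequence $u_n := W_n(x_n-z)$ and then recover $x_n$ by inverting $W_n$. Taking $x=z$ in \eqref{e:vmqf3} shows that the real sequence $a_n := \|x_n-z\|^2_{W_n}$ is quasi-monotone and bounded, hence converges; since $a_n \geq \alpha\|x_n-z\|^2$, $(x_n)$ is bounded.

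Next I exploit the ball condition to control the increments of $u_n$. For each $y\in\HH$ with $\|y\|\leq\rho$, substituting both $x=z+y$ and $x=z-y$ in \eqref{e:vmqf3} and expanding the squared $W_{n+1}$- and $W_n$-norms (using that $a_n$, $a_{n+1}$, $\|y\|^2_{W_n}$, $\|y\|^2_{W_{n+1}}$ are invariant under $y\mapsto-y$) yields
\begin{equation*}
2\big|\scal{(1+\eta_n)u_n - u_{n+1}}{y}\big| \leq (1+\eta_n)a_n - a_{n+1} + (1+\eta_n)\|y\|^2_{W_n} - \|y\|^2_{W_{n+1}} + \varepsilon_n.
\end{equation*}
Setting $\mu:=\sup_n\|W_n\|<\pinf$ and $\tau_n:=(1+\eta_n)(1+\nu_n)-1\in\ell_+^1(\NN)$, the bound $(1+\nu_n)W_{n+1}\succcurlyeq W_n$ yields $(1+\eta_n)\|y\|^2_{W_n}-\|y\|^2_{W_{n+1}}\leq\tau_n\mu\rho^2$, so taking the supremum over $\|y\|\leq\rho$ produces
\begin{equation*}
2\rho\,\|(1+\eta_n)u_n-u_{n+1}\|\leq (1+\eta_n)a_n - a_{n+1} + \tau_n\mu\rho^2 + \varepsilon_n.
\end{equation*}
All terms on the right are summable (the first difference telescopes modulo the summable contribution $\eta_n a_n$, since $(a_n)$ is bounded). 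Hence $\sum_n\|(1+\eta_n)u_n-u_{n+1}\|<\pinf$, and combining with boundedness of $(u_n)$ and summability of $(\eta_n)$ gives $\sum_n\|u_{n+1}-u_n\|<\pinf$, so $u_n\to u$ strongly for some $u\in\HH$.

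The remaining task is to pass through $V_n := W_n^{-1}$. By Lemma~\ref{l:kjMMXII}\ref{l:kjMMXII-i} applied to $(1+\nu_n)W_{n+1}\succcurlyeq W_n$, $V_{n+1}\preccurlyeq(1+\nu_n)V_n$; iterating gives $V_m\preccurlyeq K_nV_n$ for all $m\geq n$, where $K_n:=\prod_{k\geq n}(1+\nu_k)$ is finite and tends to $1$. In particular the nonnegative scalar sequence $\scal{V_n u}{u}$ satisfies $\scal{V_{n+1}u}{u}\leq(1+\nu_n)\scal{V_n u}{u}$ and is bounded, hence converges. The operator $T_{n,m}:=K_nV_n-V_m$ is therefore positive symmetric with uniformly bounded norm, and the standard inequality $\|T\xi\|^2\leq\|T\|\scal{T\xi}{\xi}$ for $T\succcurlyeq 0$, applied to $T_{n,m}$ and $\xi=u$, yields
\begin{equation*}
\|(K_nV_n-V_m)u\|^2\leq\|T_{n,m}\|\bigl(K_n\scal{V_n u}{u}-\scal{V_m u}{u}\bigr)\to 0
\end{equation*}
as $m,n\to\pinf$. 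Since $(K_n-1)\|V_n u\|\to 0$ too, $(V_n u)$ is strongly Cauchy, converging to some $u^*\in\HH$.

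To conclude, write $x_n-z=V_n u_n=V_n(u_n-u)+V_n u$: by Lemma~\ref{l:kjMMXII}\ref{l:kjMMXII-iii} the first summand has norm at most $\|u_n-u\|/\alpha\to 0$, and the second tends to $u^*$, so $x_n\to z+u^*$ strongly. The main difficulty is the preceding step of extracting strong convergence of $(V_n u)$ from only one-sided Loewner control on $(V_n)$, without any pointwise convergence hypothesis on $(W_n)$; the essential tool is the inequality $\|T\xi\|^2\leq\|T\|\scal{T\xi}{\xi}$ for positive symmetric $T$, which upgrades the scalar convergence of $\scal{V_n u}{u}$ into genuine strong vector convergence of $V_n u$.
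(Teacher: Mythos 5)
Your proof is correct. One caveat on the comparison itself: the paper does not prove Proposition~\ref{p:2012-03-26} --- it imports it (together with Propositions~\ref{p:1}--\ref{p:monodc}) from \cite{Guad2012} --- so there is no in-paper argument to measure against; I can only assess your proof on its merits and against the natural argument. Your first two steps are the correct variable-metric version of the classical interior-point argument for Fej\'er-monotone sequences: testing \eqref{e:vmqf3} at $z\pm y$ kills the even terms and isolates the linear functional $y\mapsto\scal{(1+\eta_n)u_n-u_{n+1}}{y}$ with $u_n=W_n(x_n-z)$, and your summability claim is sound because each right-hand term is nonnegative (it dominates a norm), so bounded partial sums suffice; together with boundedness of $(u_n)$ this gives $\sum_n\|u_{n+1}-u_n\|<\pinf$ and a strong limit $u$. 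The genuinely variable-metric difficulty is your third step, and you handle it correctly: the inequality $\|T\xi\|^2\leq\|T\|\scal{T\xi}{\xi}$ for positive self-adjoint $T$ is precisely the engine of the monotone convergence theorem for self-adjoint operators, and you are in effect re-proving that theorem at the single vector $u$. You could shorten this: the hypotheses $\sup_n\|W_n\|<\pinf$ and $(\forall n)\;(1+\nu_n)W_{n+1}\succcurlyeq W_n$ are the mirror image of the sufficient condition quoted in Proposition~\ref{p:guad1}, and by the same device (multiply by $\prod_{k<n}(1+\nu_k)$ to obtain a bounded nondecreasing operator sequence) they already force $W_n\to W$ pointwise for some $W\in\BP_\alpha(\HH)$, whence $W_n^{-1}\to W^{-1}$ pointwise by the resolvent-type identity $W_n^{-1}-W^{-1}=W_n^{-1}(W-W_n)W^{-1}$ and the uniform bound of Lemma~\ref{l:kjMMXII}\ref{l:kjMMXII-iii}; then $V_nu\to W^{-1}u$ directly and $x_n\to z+W^{-1}u$ as you conclude. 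Either way, the argument is complete and I see no gap.
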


\subsection{Monotone operators}

We establish some results on monotone operators in a variable
metric environment.

\begin{lemma}
\label{l:maxmon45}
Let $A\colon\HH\to 2^{\HH}$ be maximally monotone, let
$\alpha\in\RPP$, let $U\in{\EuScript P}_\alpha(\HH)$, and let 
$\GG$ be the real Hilbert space obtained by endowing $\HH$ with 
the scalar product 
$(x,y)\mapsto\scal{x}{y}_{U^{-1}}=\scal{x}{U^{-1}y}$.
Then the following hold.
\begin{enumerate}
\item
\label{l:maxmon45i}
$UA\colon\GG\to 2^{\GG}$ is maximally monotone.
\item
\label{l:maxmon45ii}
$J_{UA}\colon\GG\to\GG$ is $1$-cocoercive, i.e., 
firmly nonexpansive, hence nonexpansive.
\item
\label{l:maxmon45iii}
$J_{UA}=(U^{-1}+A)^{-1}\circ U^{-1}$.
\end{enumerate}
\end{lemma}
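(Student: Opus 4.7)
The plan is to treat the three parts in the order (i), (iii), (ii), since (ii) is a clean corollary of (i) and (iii) is a direct algebraic manipulation that clarifies what the resolvent in the new metric actually is.

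For (i), I would first verify monotonicity of $UA$ on $\GG$ by a one-line computation. Take $(x,u),(y,v)\in\gra(UA)$, so that $u=Up$, $v=Uq$ with $p\in Ax$ and $q\in Ay$. Then
\begin{equation*}
\scal{x-y}{u-v}_{U^{-1}}=\scal{x-y}{U^{-1}(Up-Uq)}=\scal{x-y}{p-q}\geq 0
\end{equation*}
by monotonicity of $A$ on $\HH$. For maximality on $\GG$, I would use Minty's theorem: it suffices to show that $\ran(\Id_\GG+UA)=\GG$, and since $\GG$ and $\HH$ coincide as sets this is $\ran(\Id+UA)=\HH$. Given $z\in\HH$, the inclusion $z\in x+UAx$ is equivalent to $U^{-1}z\in U^{-1}x+Ax=(U^{-1}+A)x$, so the task reduces to showing that $U^{-1}+A$ is surjective on $\HH$. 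Here $U^{-1}\in\BP_{1/\|U\|}(\HH)$ by Lemma~\ref{l:kjMMXII}\ref{l:kjMMXII-ii}--\ref{l:kjMMXII-iii}, hence $U^{-1}$ is a bounded, everywhere defined, strongly monotone linear operator; in particular it is maximally monotone, so $U^{-1}+A$ is maximally monotone and strongly monotone, which yields surjectivity. This is the main obstacle, as it is the only place where we have to leave the routine bookkeeping and invoke an outside sum theorem together with Minty; everything else is essentially a change of variables.

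For (iii), I would simply unfold the definition in $\GG$: $y=J_{UA}x$ iff $x\in y+UAy$ iff $U^{-1}x-U^{-1}y\in Ay$ iff $U^{-1}x\in(U^{-1}+A)y$, which gives $y=(U^{-1}+A)^{-1}(U^{-1}x)$, i.e., $J_{UA}=(U^{-1}+A)^{-1}\circ U^{-1}$. The fact that $(U^{-1}+A)^{-1}$ is single-valued (so the expression makes sense) again follows from strong monotonicity of $U^{-1}+A$.

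For (ii), having established in (i) that $UA$ is maximally monotone on $\GG$, I would quote the standard fact that the resolvent of a maximally monotone operator is firmly nonexpansive, applied in the Hilbert space $\GG$. This yields that $J_{UA}$ is $1$-cocoercive with respect to $\scal{\cdot}{\cdot}_{U^{-1}}$, hence nonexpansive in the same metric, which is exactly the statement of (ii).
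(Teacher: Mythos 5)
Your proposal is correct, but for the maximality claim in part (i) you take a genuinely different route from the paper. The paper argues directly from the definition of maximal monotonicity: given $(y,v)$ monotonically related to $\gra(UA)$ in the $U^{-1}$-scalar product, it rewrites the inequality as $\scal{x-y}{u-U^{-1}v}\geq 0$ for all $(x,u)\in\gra A$ and invokes the maximality of $A$ on $\HH$ to conclude $(y,U^{-1}v)\in\gra A$, hence $(y,v)\in\gra(UA)$ --- a pure change of variables requiring nothing beyond the hypothesis on $A$. You instead go through Minty's theorem, reducing surjectivity of $\Id+UA$ to surjectivity of $U^{-1}+A$, which you obtain from the sum theorem (a maximally monotone operator plus a maximally monotone operator with full domain) together with the surjectivity of strongly monotone maximally monotone operators. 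Both arguments are valid; yours is quicker to state but leans on two nontrivial external results (the sum theorem and Minty's theorem), whereas the paper's is self-contained and elementary, which is presumably why it was chosen. Parts (ii) and (iii) coincide with the paper's proof: (iii) is the same unfolding of $x\in p+UAp$ into $U^{-1}x\in(U^{-1}+A)p$, and (ii) is the same appeal to the firm nonexpansiveness of resolvents of maximally monotone operators, applied in the renormed space $\GG$.
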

\begin{proof}
\ref{l:maxmon45i}:
Set $B=UA$ and $V=U^{-1}$. For every $(x,u)\in\gra B$ and every
$(y,v)\in\gra B$, $Vu\in VBx=Ax$ and $Vv\in VBy=Ay$, so that 
\begin{equation}
\scal{x-y}{u-v}_V=\scal{x-y}{Vu-Vv}\geq 0
\end{equation}
by monotonicity of $A$ on $\HH$. This shows that $B$ is monotone 
on $\GG$. Now let $(y,v)\in\HH^{2}$ be such that
\begin{equation}
\label{e:conon}
(\forall(x,u)\in\gra B)\quad\scal{x-y}{u-v}_{V}\geq 0.
\end{equation}
Then, for every $(x,u)\in\gra A$, $(x,Uu)\in\gra B$ and
we derive from \eqref{e:conon} that 
\begin{equation}
\label{e:conon1}
\scal{x-y}{u-Vv}=\scal{x-y}{Uu-v}_{V}\geq 0.
\end{equation}
Since $A$ is maximally monotone on $\HH$, \eqref{e:conon1} 
gives $(y,Vv)\in\gra A$, which implies that $(y,v)\in\gra B$. 
Hence, $B$ is maximally monotone on $\GG$.

\ref{l:maxmon45ii}: This follows from \ref{l:maxmon45i} and
\cite[Corollary~23.8]{Livre1}.

\ref{l:maxmon45iii}: Let $x$ and $p$ be in $\GG$. Then 
$p=J_{UA}x$ $\Leftrightarrow$ $x\in p+UAp$
$\Leftrightarrow$ $U^{-1}x\in(U^{-1}+A)p$
$\Leftrightarrow$ $p=(U^{-1}+A)^{-1}(U^{-1}x)$. 
\end{proof}

\begin{remark}
\label{r:2012-04-15}
let $\alpha\in\RPP$, let $U\in{\EuScript P}_\alpha(\HH)$, 
set $f\colon\HH\to\RR\colon x\mapsto\scal{U^{-1}x}{x}/2$, and 
let $D\colon(x,y)\mapsto f(x)-f(y)-\scal{x-y}{\nabla f(y)}$
be the associated Bregman distance. Then 
Lemma~\ref{l:maxmon45}\ref{l:maxmon45iii} asserts that
$J_{UA}=(\nabla f +A)^{-1}\circ\nabla f$. In other words, 
$J_{UA}$ is the $D$-resolvent of $A$ introduced in 
\cite[Definition~3.7]{Sico03}.
\end{remark}

Let $U\in\BP_{\alpha}(\HH)$ for some $\alpha\in\RPP$. 
The proximity operator of $f\in\Gamma_0(\HH)$ relative to 
the metric induced by $U$ is \cite[Section~XV.4]{Hull93} 
\begin{equation}
\prox^{U}_f\colon\HH\to\HH\colon x
\mapsto\underset{y\in\HH}{\argmin}\:f(y)+\frac12\|x-y\|_{U}^2,
\end{equation}
and the projector onto a nonempty closed convex subset $C$ of 
$\HH$ relative to the norm $\|\cdot\|_{U}$ is denoted by $P_C^U$. 
We have
\begin{equation}
\label{e:prox3}
\prox^{U}_{f}=J_{U^{-1}\partial f}\quad\text{and}\quad
P_C^U=\prox^{U}_{\iota_C},
\end{equation}
and we write $\prox_f^{\Id}=\prox_f$.

In the case when $U=\Id$ in Lemma~\ref{l:maxmon45}, examples of
closed form expressions for $J_{UA}$ and basic resolvent calculus 
rules can be found in \cite{Livre1,Banf11,Smms05}. A few examples 
illustrating the case when $U\neq\Id$ are provided below.
The first result is an extension of the well-known resolvent
identity $J_A+J_{A^{-1}}=\Id$.

\begin{example}
\label{ex:resolvent}
Let $\alpha\in\RPP$, let $\gamma\in\RPP$, and let 
$U\in\BP_{\alpha}(\HH)$. Then the following hold.
\begin{enumerate}
\item
\label{ex:resolventi}
Let $A\colon\HH\to 2^{\HH}$ be maximally monotone. Then
\begin{equation}
J_{\gamma UA}=\sqrt{U}J_{\gamma\sqrt{U}A\sqrt{U}}\sqrt{U}^{-1}
=\Id-\gamma UJ_{\gamma^{-1}U^{-1}A^{-1}}(\gamma^{-1}U^{-1}).
\end{equation}
\item
\label{ex:resolventii}
Let $f\in\Gamma_0(\HH)$. Then
$\prox^U_{\gamma f}=\sqrt{U^{-1}}\prox_{\gamma f\circ\sqrt{U^{-1}}}
\sqrt{U}=\Id-\gamma U^{-1}\prox^{U^{-1}}_{\gamma^{-1}f^*}
(\gamma^{-1}U)$.
\item
\label{ex:resolventiii}
Let $C$ be a nonempty closed convex subset of $\HH$. Then
$\prox^U_{\gamma\sigma_C}=\sqrt{U^{-1}}\prox_{\gamma\sigma_C
\circ\sqrt{U^{-1}}}\sqrt{U}=\Id-\gamma U^{-1}P_C^{U^{-1}}
(\gamma^{-1}U)$.
\end{enumerate}
\end{example}
\begin{proof}
\ref{ex:resolventi}:
Let $x$ and $p$ be in $\HH$. Then 
\begin{eqnarray}
\label{e:resolvent24}
p=J_{\gamma UA}x 
&\Leftrightarrow&x-p\in\gamma UAp\nonumber\\
&\Leftrightarrow&\sqrt{U}^{-1}x-\sqrt{U}^{-1}p
\in\gamma\sqrt{U}A\sqrt{U}\sqrt{U}^{-1}p\nonumber\\
&\Leftrightarrow&\sqrt{U}^{-1}p=
J_{\gamma\sqrt{U}A\sqrt{U}}\big(\sqrt{U}^{-1}x\big)\nonumber\\
&\Leftrightarrow&p=
\sqrt{U}J_{\gamma\sqrt{U}A\sqrt{U}}\big(\sqrt{U}^{-1}x\big).
\end{eqnarray}
Furthermore, by \cite[Proposition~23.23(ii)]{Livre1},
$J_{\sqrt{U}(\gamma A)\sqrt{U}}=\Id-\sqrt{U}
\big(U+(\gamma A)^{-1}\big)^{-1}\sqrt{U}$.
Hence, \eqref{e:resolvent24} yields
\begin{equation}
\label{e:resolvent26}
J_{\gamma UA}=\Id-U\big(U+(\gamma A)^{-1}\big)^{-1}.
\end{equation}
However 
\begin{eqnarray}
p=\big(U+(\gamma A)^{-1}\big)^{-1}x 
&\Leftrightarrow&x\in Up+(\gamma A)^{-1}p\nonumber\\
&\Leftrightarrow&\gamma^{-1}p\in A(x-Up)\nonumber\\
&\Leftrightarrow&x-Up\in A^{-1}(\gamma^{-1}p)\nonumber\\
&\Leftrightarrow&\gamma^{-1}U^{-1}x\in\big(\Id+
\gamma^{-1}U^{-1}A^{-1}\big)(\gamma^{-1}p)\nonumber\\
&\Leftrightarrow&\gamma^{-1}p=J_{\gamma^{-1}U^{-1}A^{-1}}
(\gamma^{-1}U^{-1}x).
\end{eqnarray}
Hence, $(U+(\gamma A)^{-1})^{-1}=\gamma 
J_{\gamma^{-1}U^{-1}A^{-1}}(\gamma^{-1}U^{-1})$ and, using 
\eqref{e:resolvent26}, we obtain the rightmost identity in 
\ref{ex:resolventi}.

\ref{ex:resolventii}: Apply \ref{ex:resolventi} to $A=\partial f$,
and use \eqref{e:prox3}.

\ref{ex:resolventiii}: Apply \ref{ex:resolventii} to $f=\sigma_C$, 
and use \eqref{e:prox3}.
\end{proof}

\begin{example}
Define $\GGG$ as in Notation~\ref{n:palawan-mai2008-}, let 
$\alpha\in\RR$, and, for every $i\in\{1,\ldots,m\}$, let 
$A_i\colon\GG_i\to 2^{\GG_i}$ be maximally 
monotone and let $U_i\in\BP_\alpha(\GG_i)$. Set 
$\boldsymbol{A}\colon\GGG\to 2^{\GGG}\colon
(x_i)_{1\leq i\leq m}\mapsto\cart_{\!i=1}^{\!m}A_ix_i$ and 
$\boldsymbol{U}\colon\GGG\to\GGG\colon(x_i)_{1\leq i\leq m}\mapsto
(U_ix_i)_{1\leq i\leq m}$. Then ${\boldsymbol U}{\boldsymbol A}$ is 
maximally monotone and
\begin{equation}
\label{e:noumea2010-09-02a}
(\forall (x_i)_{1\leq i\leq m}\in\GGG)\quad
J_{\boldsymbol{U}\boldsymbol{A}}(x_i)_{1\leq i\leq m}=
(J_{U_iA_i}x_i)_{1\leq i\leq m}.
\end{equation}
\end{example}
\begin{proof}
This follows from Lemma~\ref{l:maxmon45}\ref{l:maxmon45i} and 
\cite[Proposition~23.16]{Livre1}.
\end{proof}

\begin{example}
Let $\alpha\in\RPP$, let $\xi\in\RR$, let $U\in\BP_{\alpha}(\HH)$, 
let $\phi\in\Gamma_0(\RR)$, suppose that $0\neq u\in\HH$, and set 
$H=\menge{x\in\HH}{\scal{x}{u}\leq \xi}$ and 
$g=\phi(\scal{\cdot}{u})$. Then $g\in\Gamma_0(\HH)$ and 
\begin{equation}
\label{e:oda1}
(\forall x\in\HH)\quad\prox_{g}^Ux=x+
\displaystyle{\frac{\prox_{\|\sqrt{U^{-1}}u\|^2\phi}\scal{x}{u}
-\scal{x}{u}}{\|\sqrt{U^{-1}}u\|^2}}U^{-1}u
\end{equation}
and
\begin{equation}
\label{e:oda2}
P_{H}^{U}x=
\begin{cases}
x,&\text{if}\;\;\scal{x}{u}\leq\xi;\\
x+\displaystyle{\frac{\xi-\scal{x}{u}}{\scal{u}{U^{-1}u}}U^{-1}u},
&\text{if}\;\;\scal{x}{u}>\xi.
\end{cases}
\end{equation}
\begin{proof} 
It follows from Example \ref{ex:resolvent}\ref{ex:resolventii} 
that
\begin{align}
\label{e:redoex}
(\forall x\in\HH)\quad \prox_{g}^Ux 
&=\sqrt{U^{-1}}\prox_{g\circ\sqrt{U^{-1}}}\sqrt{U}x.
\end{align}
Moreover, $g\circ\sqrt{U^{-1}}=\phi(\sscal{\cdot}{\sqrt{U^{-1}}u})$.
Hence, using \eqref{e:redoex} and \cite[Corollary~23.33]{Livre1}, 
we obtain 
\begin{align}
(\forall x\in\HH)\quad\prox_{g}^Ux
&=\sqrt{U^{-1}}\prox_{\phi(\scal{\cdot}{\sqrt{U^{-1}}u})}\sqrt{U}x
\nonumber\\
&=x+\displaystyle{\frac{\prox_{\|\sqrt{U^{-1}}u\|^2\phi}\scal{x}{u}
-\scal{x}{u}}{\|\sqrt{U^{-1}}u\|^2}}U^{-1}u.
\end{align}
Finally, upon setting $\phi=\iota_{\left]\minf,\xi\right]}$,
we obtain \eqref{e:oda2} from \eqref{e:oda1}.
\end{proof}
\end{example}

\begin{example} 
\label{ex:r879}
Let $\alpha\in\RPP$, let $\gamma\in\RR$, let $A\in\BP_0(\HH)$,
let $u\in\HH$, let $U\in\BP_{\alpha}(\HH)$, and set 
$\varphi\colon\HH\to\RR\colon x\mapsto\scal{Ax}{x}/2
+\scal{x}{u}+\gamma$. Then $\varphi\in\Gamma_0(\HH)$ and
\begin{equation}
\label{e:id2}
(\forall x\in\HH)\quad
\prox_{\varphi}^{U}x=(\Id+U^{-1}A)^{-1}(x-U^{-1}u). 
\end{equation}
\begin{proof}
Let $x\in\HH$. Then
$p=\prox_{\varphi}^{U}x$ 
$\Leftrightarrow$ $x-p=U^{-1}\nabla\varphi(p)$
$\Leftrightarrow$ $x-p=U^{-1}(Ap+u)$
$\Leftrightarrow$ $x-U^{-1}u=(\Id+U^{-1}A)p$
$\Leftrightarrow$ $p=(\Id+U^{-1}A)^{-1}(x-U^{-1}u)$.
\end{proof}
\end{example}

\begin{example}
Let $\alpha\in\RPP$ and let $U\in\BP_{\alpha}(\HH)$.
For every $i\in\{1,\ldots,m\}$, let $r_i\in\GG_i$, 
let $\omega_i\in\RPP$, and let $L_i\in\BL(\HH,\GG_i)$.
Set $\varphi\colon x\mapsto(1/2)\sum_{i=1}^m\omega_i\|L_ix-r_i\|^2$.
Then $\varphi\in\Gamma_0(\HH)$ and 
\begin{equation}
\label{e:id3}
(\forall x\in\HH)\quad\prox_{\varphi}^{U}x=
\bigg(\Id+U^{-1}\sum_{i=1}^{m}\omega_iL_{i}^*L_i\bigg)^{-1}
\bigg(x+U^{-1}\sum_{i=1}^{m}\omega_iL_{i}^*r_i\bigg). 
\end{equation}
\end{example}
\begin{proof}
We have $\varphi\colon x\mapsto\scal{Ax}{x}/2+\scal{x}{u}+\gamma$, 
where $A=\sum_{i=1}^{m}\omega_iL_{i}^{*}L_i$,
$u=-\sum_{i=1}^{m}\omega_iL_{i}^*r_i$, and 
$\gamma=\sum_{i=1}^{m}\omega_i\|r_i\|^{2}/2$.
Hence, \eqref{e:id3} follows from \eqref{e:id2}.
\end{proof}

\subsection{Demiregularity}

\begin{definition}{\rm\cite[Definition~2.3]{Sico10}}
\label{d:demir}
An operator $A\colon\HH\to 2^{\HH}$ is \emph{demiregular} at
$x\in\dom A$ if, for every sequence $((x_n,u_n))_{n\in\NN}$ in 
$\gra A$ and every $u\in Ax$ such that $x_n\weakly x$ and 
$u_n\to u$, we have $x_n\to x$.
\end{definition}

\begin{lemma}{\rm\cite[Proposition~2.4]{Sico10}}
\label{l:2009-09-20}
Let $A\colon\HH\to 2^{\HH}$ be monotone and suppose that 
$x\in\dom A$. Then $A$ is demiregular at $x$ in each of the 
following cases.
\begin{enumerate}
\item
\label{l:2009-09-20i}
$A$ is uniformly monotone at $x$, i.e., there exists 
an increasing function $\phi\colon\RP\to\RPX$ that vanishes only 
at $0$ such that
$(\forall u\in Ax)(\forall (y,v)\in\gra A)$
$\scal{x-y}{u-v}\geq\phi(\|x-y\|)$.
\item
\label{l:2009-09-20ii}
$A$ is strongly monotone, i.e., there exists $\alpha\in\RPP$ such
that $A-\alpha\Id$ is monotone.
\item
\label{l:2009-09-20iv-}
$J_A$ is compact, i.e., for every bounded set $C\subset\HH$,
the closure of $J_A(C)$ is compact. In particular, 
$\dom A$ is boundedly relatively compact, i.e., the intersection of 
its closure with every closed ball is compact.
\item
\label{l:2009-09-20vi}
$A\colon\HH\to\HH$ is single-valued with a single-valued continuous
inverse.
\item
\label{l:2009-09-20vii}
$A$ is single-valued on $\dom A$ and $\Id-A$ is demicompact, i.e., 
for every bounded sequence $(x_n)_{n\in\NN}$ in $\dom A$ such 
that $(Ax_n)_{n\in\NN}$ converges strongly, $(x_n)_{n\in\NN}$ 
admits a strong cluster point.
\item
\label{p:2009-09-20ii+}
$A=\partial f$, where $f\in\Gamma_0(\HH)$ is uniformly convex
at $x$, i.e., there exists an increasing function 
$\phi\colon\RP\to\RPX$ that vanishes only at $0$ such that
$(\forall\alpha\in\zeroun)(\forall y\in\dom f)$
$f\big(\alpha x+(1-\alpha) y\big)+\alpha(1-\alpha)\phi(\|x-y\|)
\leq \alpha f(x)+(1-\alpha)f(y)$.
\item
\label{p:2009-09-20ii++++}
$A=\partial f$, where $f\in\Gamma_0(\HH)$ and, for every
$\xi\in\RR$, $\menge{x\in\HH}{f(x)\leq\xi}$ is boundedly compact.
\end{enumerate}
\end{lemma}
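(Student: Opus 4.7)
The plan is to dispose of the seven cases by reducing them to three core arguments: a weak-strong duality pairing for \ref{l:2009-09-20i}, \ref{l:2009-09-20ii}, and \ref{p:2009-09-20ii+}; a bounded-sequence-plus-compactness extraction for \ref{l:2009-09-20iv-}, \ref{l:2009-09-20vii}, and \ref{p:2009-09-20ii++++}; and direct continuity for \ref{l:2009-09-20vi}.

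For case \ref{l:2009-09-20i}, I would use uniform monotonicity at $x$ to write $\scal{x-x_n}{u-u_n}\geq\phi(\|x-x_n\|)$; since $x_n-x\weakly 0$ while $u_n-u\to 0$, the product on the left tends to $0$, so $\phi(\|x-x_n\|)\to 0$, and the hypotheses on $\phi$ force $\|x-x_n\|\to 0$. Case \ref{l:2009-09-20ii} is the special instance $\phi(t)=\alpha t^2$. For case \ref{p:2009-09-20ii+}, I would first promote uniform convexity at $x$ to uniform monotonicity of $\partial f$ at $x$: given $u\in\partial f(x)$ and $(y,v)\in\gra\partial f$, bounding $f(\alpha x+(1-\alpha)y)$ below via the subgradient inequality at $y$, then dividing the uniform convexity inequality by $\alpha$ and letting $\alpha\downarrow 0$, yields $f(x)-f(y)\geq\scal{x-y}{v}+\phi(\|x-y\|)$; adding the subgradient inequality $f(y)-f(x)\geq\scal{y-x}{u}$ produces $\scal{x-y}{u-v}\geq\phi(\|x-y\|)$, after which \ref{l:2009-09-20i} applies.

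For case \ref{l:2009-09-20iv-}, I would observe that $x_n\in J_A(x_n+u_n)$ and that $(x_n+u_n)_{n\in\NN}$ is bounded; compactness of $J_A$ then makes $(x_n)_{n\in\NN}$ relatively compact, and any strong cluster point must coincide with the weak limit $x$, whence $x_n\to x$. The parenthetical sufficient condition is covered by noting that $J_A$ is nonexpansive and takes values in $\dom A$, so it maps bounded sets into bounded subsets of $\dom A$, whose closure is compact. Case \ref{l:2009-09-20vii} follows by applying demicompactness of $\Id-A$ to the bounded sequence $(x_n)_{n\in\NN}$ with $Ax_n=u_n\to u$, extracting a strong cluster point necessarily equal to $x$, and repeating on every subsequence. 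For case \ref{p:2009-09-20ii++++}, the subgradient inequality at $x_n$ yields $f(x_n)\leq f(x)+\scal{x_n-x}{u_n}$, whose right-hand side tends to $f(x)$ by a weak-strong pairing; combined with the weak lower semicontinuity of $f$, this forces $f(x_n)\to f(x)$, so $(x_n)_{n\in\NN}$ eventually lies in the boundedly compact level set $\menge{y\in\HH}{f(y)\leq f(x)+1}$ and the same cluster-point argument applies. Case \ref{l:2009-09-20vi} is immediate: $x_n=A^{-1}u_n\to A^{-1}u=x$ by continuity of $A^{-1}$.

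The main delicate point will be case \ref{p:2009-09-20ii+}, where the passage to the limit $\alpha\downarrow 0$ in the uniform convexity inequality must be handled carefully to produce a genuinely uniform monotonicity estimate valid at the fixed point $x$; case \ref{p:2009-09-20ii++++} is also a little subtle because it exploits both directions of semicontinuity of $f$ to convert weak convergence of $(x_n)_{n\in\NN}$ into strong convergence of $(f(x_n))_{n\in\NN}$, which is what unlocks the level-set compactness.
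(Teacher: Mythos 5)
Your proof is correct: each of the seven cases is disposed of by a sound argument --- the weak--strong pairing $\scal{x-x_n}{u-u_n}\to 0$ combined with the properties of $\phi$ for \ref{l:2009-09-20i}, the specialization $\phi(t)=\alpha t^2$ for \ref{l:2009-09-20ii}, the promotion of uniform convexity at $x$ to uniform monotonicity of $\partial f$ at $x$ via the limit $\alpha\downarrow 0$ for \ref{p:2009-09-20ii+}, the identity $x_n=J_A(x_n+u_n)$ and the subsequence-extraction principle for \ref{l:2009-09-20iv-}, \ref{l:2009-09-20vii}, and \ref{p:2009-09-20ii++++} (where the two-sided argument giving $f(x_n)\to f(x)$ is exactly what is needed to enter the compact level set), and direct continuity of $A^{-1}$ for \ref{l:2009-09-20vi}. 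The paper itself supplies no proof of this lemma --- it is quoted from \cite[Proposition~2.4]{Sico10} --- and your arguments are essentially the standard ones given in that reference, so there is nothing to compare beyond noting the agreement.
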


\section{Algorithm and convergence}
\label{sec:4}

Our main result is stated in the following theorem.

\begin{theorem}
\label{t:1}
Let $A\colon\HH\to 2^{\HH}$ be maximally monotone, let
$\alpha\in\RPP$, let $\beta\in\RPP$, let $B\colon\HH\to\HH$ be 
$\beta$-cocoercive, let $(\eta_n)_{n\in\NN}\in\ell_+^1(\NN)$,
and let $(U_n)_{n\in\NN}$ be a sequence in 
$\BP_{\alpha}(\HH)$ such that
\begin{equation}
\label{e:palawan2012-03-09}
\mu=\sup_{n\in\NN}\|U_n\|<\pinf\quad\text{and}\quad
(\forall n\in\NN)\quad (1+\eta_n)U_{n+1} \succcurlyeq U_n.
\end{equation}
Let $\varepsilon\in\left]0,\min\{1,2\beta/(\mu+1)\}\right]$,
let $(\lambda_n)_{n\in\NN}$ be a sequence in 
$\left[\varepsilon,1\right]$, let $(\gamma_n)_{n\in\NN}$ be a 
sequence in $[\varepsilon,(2\beta-\varepsilon)/\mu]$,  let
$x_0\in\HH$, and let $(a_n)_{n\in\NN}$ and $(b_n)_{n\in\NN}$ be 
absolutely summable sequences in $\HH$. Suppose that
\begin{equation}
\label{e:palawan2012-03-10}
Z=\zer(A+B)\neq\emp,
\end{equation}
and set 
\begin{equation}
\label{e:forward}
(\forall n\in\NN)\quad
\begin{array}{l}
\left\lfloor
\begin{array}{l}
y_n= x_n-\gamma_n U_n (Bx_n+b_n)\\[1mm]
x_{n+1}=x_n+\lambda_n\big(J_{\gamma_nU_nA}\,
(y_n)+a_n-x_n\big).
\end{array}
\right.\\[2mm]
\end{array}
\end{equation}
Then the following hold for some $\overline{x}\in Z$.
\begin{enumerate}
\item 
\label{t:1i}
$x_n\weakly\overline{x}$.
\item 
\label{t:1ii}
$\sum_{n\in\NN}\|Bx_n-B\overline{x}\|^2<\pinf$.
\item
\label{t:1iii}
Suppose that one of the following holds.
\begin{enumerate}
\item
\label{t:1iiia}
$\varliminf d_Z(x_n)=0$.
\item
\label{t:1iiib}
At every point in $Z$, $A$ or $B$ is demiregular 
(see Lemma~\ref{l:2009-09-20} for special cases).
\item
\label{t:1iiic}
$\inte Z\neq\emp$ and there exists 
$(\nu_n)_{n\in\NN}\in\ell_+^1(\NN)$ such that 
$(\forall n\in\NN)$ $(1+\nu_n)U_n\succcurlyeq U_{n+1}$.
\end{enumerate}
Then $x_n\to\overline{x}$. 
\end{enumerate}
\end{theorem}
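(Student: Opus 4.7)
The plan is to recast the iteration as a forward-backward scheme in the variable Hilbertian metric $W_n:=U_n^{-1}$ and apply the quasi-Fej\'er technology of Section~\ref{sec:3}. By Lemma~\ref{l:kjMMXII}, the bounds $\alpha\Id\preccurlyeq U_n\preccurlyeq\mu\Id$ give $\mu^{-1}\Id\preccurlyeq W_n\preccurlyeq\alpha^{-1}\Id$, so the norms $\|\cdot\|_{W_n}$ are uniformly equivalent to $\|\cdot\|$ and $\sup_n\|W_n\|\leq\alpha^{-1}$. Moreover, $(1+\eta_n)U_{n+1}\succcurlyeq U_n$ rephrases, via Lemma~\ref{l:kjMMXII}\ref{l:kjMMXII-i}, as $(1+\eta_n)W_n\succcurlyeq W_{n+1}$. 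Finally, Lemma~\ref{l:maxmon45}\ref{l:maxmon45ii} asserts that $T_n:=J_{\gamma_nU_nA}$ is firmly nonexpansive in $\|\cdot\|_{W_n}$, and every $\overline{x}\in Z$ is a fixed point of $T_n\circ(\Id-\gamma_nU_nB)$.

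The core computation is a descent inequality. Writing $z_n=T_ny_n$, so that $x_{n+1}=x_n+\lambda_n(z_n+a_n-x_n)$, I combine: firm nonexpansiveness of $T_n$ in $\|\cdot\|_{W_n}$ applied at $y_n$ and $\overline{x}-\gamma_nU_nB\overline{x}$; the estimate $\|U_nv\|_{W_n}^2=\scal{U_nv}{v}\leq\mu\|v\|^2$; $\beta$-cocoercivity of $B$; the Hilbertian identity $\|(1-\lambda)u+\lambda v\|^2=(1-\lambda)\|u\|^2+\lambda\|v\|^2-\lambda(1-\lambda)\|u-v\|^2$ on the segment $[x_n,z_n]$; and the passage $\|\cdot\|_{W_{n+1}}^2\leq(1+\eta_n)\|\cdot\|_{W_n}^2$. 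This yields, for every $\overline{x}\in Z$,
\begin{equation}
\label{e:plan1}
\|x_{n+1}-\overline{x}\|_{W_{n+1}}^2\leq(1+\eta_n)\bigl(\|x_n-\overline{x}\|_{W_n}^2-\lambda_n\gamma_n(2\beta-\gamma_n\mu)\|Bx_n-B\overline{x}\|^2-\lambda_n\|r_n\|_{W_n}^2\bigr)+\varepsilon_n,
\end{equation}
with $r_n:=(x_n-z_n)-\gamma_nU_n(Bx_n-B\overline{x})-\gamma_nU_nb_n$ and $(\varepsilon_n)_{n\in\NN}\in\ell_+^1(\NN)$ absorbing the cross terms in $a_n,b_n$ (using absolute summability and an inductively obtained a priori bound on $\|x_n-\overline{x}\|_{W_n}$). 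Since $\gamma_n(2\beta-\gamma_n\mu)\geq\varepsilon^2$ and $\lambda_n\geq\varepsilon$, Proposition~\ref{p:1} applies: $(x_n)_{n\in\NN}$ is bounded and $(\|x_n-\overline{x}\|_{W_n})_{n\in\NN}$ converges for every $\overline{x}\in Z$; telescoping \eqref{e:plan1} (using boundedness of the $a_n$ and summability of $\eta_n,\varepsilon_n$) yields \ref{t:1ii} together with $\sum_n\|r_n\|_{W_n}^2<\pinf$.

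For \ref{t:1i}, the aim is to identify every weak cluster point of $(x_n)$ as an element of $Z$ and then invoke Proposition~\ref{p:guad1}. Since $\gamma_nU_n(Bx_n-B\overline{x})\to 0$ (by \ref{t:1ii}) and $b_n\to 0$, the fact that $\|r_n\|_{W_n}\to 0$, combined with the uniform norm equivalence, yields $z_n-x_n\to 0$ strongly. Rearranging the resolvent inclusion $y_n-z_n\in\gamma_nU_nAz_n$ then gives
\begin{equation}
\label{e:plan2}
\gamma_n^{-1}U_n^{-1}(x_n-z_n)-b_n-Bx_n+Bz_n\in(A+B)z_n,
\end{equation}
the left-hand side of which tends strongly to $0$ (using the $\beta^{-1}$-Lipschitz property of $B$ inherited from cocoercivity). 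For any weak cluster point $x$ of $(x_n)$, $z_n\weakly x$ along the same subsequence, and maximal monotonicity of $A+B$ (as the sum of a maximally monotone operator and a continuous monotone operator with full domain) forces $0\in Ax+Bx$, i.e.\ $x\in Z$. Proposition~\ref{p:guad1} then delivers $x_n\weakly\overline{x}$ for some $\overline{x}\in Z$, and constancy of $B$ on $Z$ (an immediate consequence of the monotonicity of $A$ and the cocoercivity of $B$) guarantees $Bx_n\to B\overline{x}$.

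For \ref{t:1iii}, case \ref{t:1iiia} is Proposition~\ref{p:monodc} applied to $C=Z$, which is closed as the zero set of the maximally monotone operator $A+B$. In case \ref{t:1iiib}, if $B$ is demiregular at $\overline{x}$ then $x_n\weakly\overline{x}$ and $Bx_n\to B\overline{x}$ immediately give $x_n\to\overline{x}$; if $A$ is demiregular at $\overline{x}$, apply the definition to the pairs $(z_n,v_n)\in\gra A$ with $v_n:=\gamma_n^{-1}U_n^{-1}(y_n-z_n)\to -B\overline{x}\in A\overline{x}$ and $z_n\weakly\overline{x}$, to deduce $z_n\to\overline{x}$, whence $x_n\to\overline{x}$. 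In case \ref{t:1iiic}, Lemma~\ref{l:kjMMXII} converts $(1+\nu_n)U_n\succcurlyeq U_{n+1}$ into $(1+\nu_n)W_{n+1}\succcurlyeq W_n$, so \eqref{e:plan1} fits Proposition~\ref{p:2012-03-26} on any closed ball contained in $\inte Z$. The principal obstacle is the bookkeeping that produces \eqref{e:plan1}: isolating cleanly the two nonnegative surplus terms coming from firm nonexpansiveness and from cocoercivity, while packaging every cross term involving $a_n,b_n$ into a legitimate $(\varepsilon_n)\in\ell_+^1(\NN)$, which forces boundedness of $\|x_n-\overline{x}\|_{W_n}$ and the recursion to be extracted in tandem.
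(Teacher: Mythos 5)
Your proposal is correct and follows essentially the same route as the paper: recast the iteration in the metric $\|\cdot\|_{U_n^{-1}}$, combine firm nonexpansiveness of $J_{\gamma_nU_nA}$ (Lemma~\ref{l:maxmon45}\ref{l:maxmon45ii}) with cocoercivity to obtain a variable-metric quasi-Fej\'er inequality with the two nonnegative surplus terms, invoke Propositions~\ref{p:1}, \ref{p:guad1}, \ref{p:monodc}, and \ref{p:2012-03-26}, and identify weak cluster points through the resolvent inclusion. The only cosmetic deviations are that the paper first establishes the unsquared Fej\'er inequality (via the unperturbed iterate $q_n$ and a separate error bound) to obtain the a priori bound $\zeta$ before squaring---resolving the bootstrapping you flag---and that it closes the graph of $A$ alone rather than of $A+B$.
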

\begin{proof}
Set 
\begin{equation}
\label{e:2}
(\forall n\in\NN)\quad
\begin{cases}
A_n=\gamma_nU_nA\\
B_n=\gamma_nU_nB
\end{cases}
\quad\text{and}\quad
\begin{cases}
p_n=J_{A_n}y_n\\
q_n=J_{A_n}(x_n-B_nx_n)\\
s_n=x_n+\lambda_n(q_n-x_n).
\end{cases}
\end{equation}
Then \eqref{e:forward} can be written as
\begin{equation}
\label{e:forward1}
(\forall n\in\NN)\quad
x_{n+1}=x_n+\lambda_n(p_n+a_n-x_n).
\end{equation}
On the other hand, \eqref{e:palawan2012-03-09} 
and Lemma~\ref{l:kjMMXII}\ref{l:kjMMXII-i}\&\ref{l:kjMMXII-iii} 
yield 
\begin{equation}
\label{e:sm-north2012-03-13a}
(\forall n\in\NN)\quad\|U_n^{-1}\|\leq\frac{1}{\alpha},\quad
U_n^{-1}\in{\EuScript P}_{1/\mu}(\HH),\quad\text{and}\quad
(1+\eta_n)U_n^{-1}\succcurlyeq U_{n+1}^{-1}
\end{equation}
and, therefore,
\begin{equation}
\label{e:balay2012-03-13a}
(\forall n\in\NN)(\forall x\in\HH)\quad
(1+\eta_n)\|x\|^2_{U^{-1}_n}\geq\|x\|^2_{U_{n+1}^{-1}}.
\end{equation}
Hence, we derive from \eqref{e:forward1}, \eqref{e:2}, 
Lemma~\ref{l:maxmon45}\ref{l:maxmon45ii}, 
\eqref{e:sm-north2012-03-13a} and 
\eqref{e:palawan2012-03-09} that
\begin{align}
\label{e:3}
(\forall n\in\NN)\quad\|x_{n+1}-s_n\|_{U^{-1}_n}
&\leq\lambda_n\Big(\|a_n\|_{U^{-1}_n}+\|p_n-q_n\|_{U_n^{-1}}\Big) 
\nonumber\\
&\leq\|a_n\|_{U^{-1}_n}+
\|y_n-x_n+B_nx_n\|_{U_n^{-1}}\nonumber\\
&\leq\|a_n\|_{U^{-1}_n}+\gamma_n\|U_nb_n\|_{U_n^{-1}}\nonumber\\
&\leq\sqrt{\|U_n^{-1}\|}\,\|a_n\|+
\gamma_n\sqrt{\|U_n\|}\,\|b_n\|\nonumber\\
&\leq\frac{1}{\sqrt{\alpha}}\|a_n\|+
\frac{2\beta-\varepsilon}{\sqrt{\mu}}\|b_n\|.
\end{align}
Now let $z\in Z$. Since $B$ is $\beta$-cocoercive, 
\begin{equation}
\label{e:4} 
(\forall n\in\NN)\quad\scal{x_n-z}{Bx_n-Bz}\geq\beta\|Bx_n-Bz\|^2.
\end{equation}
On the other hand, it follows from \eqref{e:palawan2012-03-09} that  
\begin{equation}
\label{e:5}
\big(\forall n\in\NN\big)\quad\|B_nx_n-B_n z\|_{U^{-1}_n}^2\leq
\gamma_n^2\|U_n\|\,\|Bx_n-Bz\|^2\leq\gamma_n^2\mu\|Bx_n-Bz\|^2.
\end{equation}
We also note that, since $-Bz\in Az$, \eqref{e:2} yields
\begin{equation}
\label{e:forward2}
\big(\forall n\in\NN\big)\quad z= J_{A_n}(z-B_nz).
\end{equation}
Altogether, it follows from \eqref{e:2}, \eqref{e:forward2}, 
Lemma~\ref{l:maxmon45}\ref{l:maxmon45ii}, \eqref{e:4}, and 
\eqref{e:5} that
\begin{align}
\label{e:7}
(\forall n\in\NN)\quad\|q_n-z\|_{U^{-1}_n}^2
&\leq\|(x_n-z)-(B_nx_n- B_n z)\|_{U^{-1}_n}^2\nonumber\\
&\quad\;-\|(x_n-q_n)-(B_nx_n- B_n z)\|_{U^{-1}_n}^2\nonumber\\
&=\|x_n-z\|_{U^{-1}_n}^2-2\scal{x_n-z}{B_nx_n-B_nz}_{U^{-1}_n}
+\|B_nx_n-B_nz\|_{U^{-1}_n}^2\nonumber\\
&\quad\;-\|(x_n-q_n)-(B_nx_n- B_n z)\|_{U^{-1}_n}^2\nonumber\\
&=\|x_n-z\|_{U^{-1}_n}^2-2\gamma_n\scal{x_n-z}{Bx_n- Bz}+ 
\|B_nx_n-B_n z\|_{U^{-1}_n}^2\nonumber\\
&\quad\;-\|(x_n-q_n)-(B_nx_n- B_n z)\|_{U^{-1}_n}^2\nonumber\\
&\leq\|x_n-z\|_{U^{-1}_n}^2-\gamma_n(2\beta-\mu\gamma_n)
\|Bx_n-Bz\|^2\nonumber\\
&\quad\;-\|(x_n-q_n)-(B_nx_n- B_n z)\|_{U^{-1}_n}^2\nonumber\\
&\leq\|x_n-z\|_{U^{-1}_n}^2-\varepsilon^2\|Bx_n-Bz\|^2\nonumber\\
&\quad\;-\|(x_n-q_n)-(B_nx_n- B_n z)\|_{U^{-1}_n}^2.
\end{align}
In turn, we derive from \eqref{e:balay2012-03-13a} and \eqref{e:2} 
that
\begin{align}
\label{e:ab2}
(\forall n\in\NN)\quad 
(1+\eta_n)^{-1}\|s_n-z\|_{U^{-1}_{n+1}}^2 
&\leq\|s_n-z\|_{U^{-1}_n}^2 \nonumber\\
&\leq(1-\lambda_n)\|x_n-z\|_{U^{-1}_n}^2 
+\lambda_n\|q_n-z\|_{U^{-1}_n}^2\nonumber\\
&\leq\|x_n-z\|_{U^{-1}_n}^2 
-\varepsilon^3\|Bx_n-Bz\|^2\nonumber\\
&\quad\;-\varepsilon\|(x_n-q_n)-(B_nx_n- B_n z)\|_{U^{-1}_n}^2,
\end{align}
which implies that 
\begin{align}
(\forall n\in\NN)\quad\|s_n-z\|_{U^{-1}_{n+1}}^2
&\leq(1+\eta_n)\|x_n-z\|_{U^{-1}_n}^2
-\varepsilon^3\|Bx_n-Bz\|^2\nonumber\\
&\quad\;-\varepsilon\|(x_n-q_n)-(B_nx_n- B_n z)\|_{U^{-1}_n}^2
\label{e:up-diliman2012-03-13a}\\
&\leq\delta^2\|x_n-z\|_{U^{-1}_n}^2,
\label{e:up-diliman2012-03-13b}
\end{align}
where
\begin{equation}
\label{e:balay2012-03-13k}
\delta=\sup_{n\in\NN}\sqrt{1+\eta_n}.
\end{equation}
Next, we set
\begin{equation}
\label{e:balay2012-03-13b}
(\forall n\in\NN)\quad\varepsilon_n=
\delta\bigg(\frac{1}{\sqrt{\alpha}}\|a_n\|+
\frac{2\beta-\varepsilon}{\sqrt{\mu}}\|b_n\|\bigg).
\end{equation}
Then our assumptions yield
\begin{equation}
\label{e:balay2012-03-13c}
\sum_{n\in\NN}\varepsilon_n<\pinf.
\end{equation}
Moreover, using \eqref{e:balay2012-03-13a},
\eqref{e:up-diliman2012-03-13a}, and 
\eqref{e:3}, we obtain
\begin{align}
\label{e:ab3}
(\forall n\in\NN)\quad 
\|x_{n+1}-z\|_{U_{n+1}^{-1}}
&\leq\|x_{n+1}-s_n\|_{U_{n+1}^{-1}}
+\|s_n-z\|_{U_{n+1}^{-1}}\nonumber\\
&\leq\sqrt{1+\eta_n}\|x_{n+1}-s_n\|_{U_n^{-1}}
+\sqrt{1+\eta_n}\|x_n-z\|_{U_n^{-1}}\nonumber\\
&\leq\delta\|x_{n+1}-s_n\|_{U_n^{-1}}+
\sqrt{1+\eta_n}\|x_n-z\|_{U_n^{-1}}\nonumber\\
&\leq\sqrt{1+\eta_n}\|x_n-z\|_{U_n^{-1}}+\varepsilon_n\nonumber\\
&\leq(1+\eta_n)\|x_n-z\|_{U_n^{-1}}+\varepsilon_n.
\end{align}
In view of \eqref{e:sm-north2012-03-13a}, \eqref{e:balay2012-03-13c},
and \eqref{e:ab3}, we can apply Proposition~\ref{p:1} to assert 
that $(\|x_n-z\|_{U_n^{-1}})_{n\in\NN}$ converges and, 
therefore, that
\begin{equation}
\label{e:balay2012-03-13i}
\zeta=\sup_{n\in\NN}\|x_n-z\|_{U_n^{-1}}<\pinf.
\end{equation}
On the other hand, \eqref{e:balay2012-03-13a}, \eqref{e:3}, and 
\eqref{e:balay2012-03-13b} yield
\begin{equation}
\label{e:balay2012-03-13g}
(\forall n\in\NN)\quad
\|x_{n+1}-s_n\|^2_{U_{n+1}^{-1}}\leq(1+\eta_n)
\|x_{n+1}-s_n\|^2_{U_n^{-1}}\leq\varepsilon^2_n.
\end{equation}
Hence, using \eqref{e:up-diliman2012-03-13a},
\eqref{e:up-diliman2012-03-13b}, \eqref{e:balay2012-03-13k}, and 
\eqref{e:balay2012-03-13i}, we get
\begin{align}
\label{e:balay2012-03-13f}
(\forall n\in\NN)\quad\|x_{n+1}-z\|^2_{U_{n+1}^{-1}}
&\leq\|s_n-z\|^2_{U_{n+1}^{-1}}+2\|s_n-z\|_{U_{n+1}^{-1}}\,
\|x_{n+1}-s_n\|_{U_{n+1}^{-1}}+\|x_{n+1}-s_n\|^2_{U_{n+1}^{-1}}
\nonumber\\
&\leq(1+\eta_n)\|x_n-z\|_{U^{-1}_n}^2
-\varepsilon^3\|Bx_n-Bz\|^2\nonumber\\
&\quad\;-\varepsilon\|x_n-q_n-B_nx_n+B_nz\|_{U^{-1}_n}^2
+2\delta\zeta\varepsilon_n+\varepsilon_n^2\nonumber\\
&\leq\|x_n-z\|_{U^{-1}_n}^2-\varepsilon^3\|Bx_n-Bz\|^2
-\varepsilon\|x_n-q_n-B_nx_n+B_nz\|_{U^{-1}_n}^2
\nonumber\\
&\quad\;+\zeta^2\eta_n+2\delta\zeta\varepsilon_n+\varepsilon_n^2.
\end{align}
Consequently, for every $N\in\NN$,
\begin{align}
\label{e:balay2012-03-13m}
\varepsilon^3\sum_{n=0}^N\|Bx_n-Bz\|^2
&\leq\|x_0-z\|_{U^{-1}_0}^2-
\|x_{N+1}-z\|_{U^{-1}_{N+1}}^2
+\sum_{n=0}^N\big(\zeta^2\eta_n+2\delta\zeta\varepsilon_n+
\varepsilon_n^2\big)\nonumber\\
&\leq\zeta^2+\sum_{n=0}^N\big(\zeta^2\eta_n+2\delta\zeta\varepsilon_n
+\varepsilon_n^2\big).
\end{align}
Appealing to \eqref{e:balay2012-03-13c} and the summability of
$(\eta_n)_{n\in\NN}$, taking the limit as $N\to\pinf$, yields 
\begin{equation}
\label{e:balay2012-03-13r}
\sum_{n\in\NN}\|Bx_n-Bz\|^2\leq\frac{1}{\varepsilon^3}\bigg(
\zeta^2+\sum_{n\in\NN}\big(\zeta^2\eta_n+
2\delta\zeta\varepsilon_n+\varepsilon_n^2\big)\bigg)<\pinf.
\end{equation}
We likewise derive from \eqref{e:balay2012-03-13f} that
\begin{equation}
\label{e:klm2012-03-14a}
\sum_{n\in\NN}\big\|x_n-q_n-B_nx_n+B_nz\big\|_{U^{-1}_n}^2
<\pinf.
\end{equation}

\ref{t:1i}: Let $x$ be a weak sequential cluster point of 
$(x_n)_{n\in\NN}$, say $x_{k_n}\weakly x$. In view of 
\eqref{e:ab3}, \eqref{e:sm-north2012-03-13a}, and 
Proposition~\ref{p:guad1}, it is enough to show that $x\in Z$. 
On the one hand, \eqref{e:balay2012-03-13r} yields 
$Bx_{k_n}\to Bz$. On the other hand, since 
$B$ is cocoercive, it is maximally monotone 
\cite[Example~20.28]{Livre1} and its graph is therefore 
sequentially closed in $\HH^{\text{weak}}\times\HH^{\text{strong}}$
\cite[Proposition~20.33(ii)]{Livre1}. This implies that
$Bx=Bz$ and hence that $Bx_{k_n}\to Bx$. Thus, in view of
\eqref{e:balay2012-03-13r},
\begin{equation}
\label{e:klm2012-03-14z}
\sum_{n\in\NN}\|Bx_n-Bx\|^2<\pinf.
\end{equation}
Now set
\begin{equation}
\label{e:klm2012-03-14c}
(\forall n\in\NN)\quad u_n=
\frac{1}{\gamma_n}U_n^{-1}(x_n-q_n)-Bx_n.
\end{equation}
Then it follows from \eqref{e:2} that
\begin{equation}
\label{e:klm2012-03-14d}
(\forall n\in\NN)\quad u_n\in Aq_n.
\end{equation}
In addition, 
\eqref{e:2}, \eqref{e:sm-north2012-03-13a}, and
\eqref{e:klm2012-03-14a} yield
\begin{align}
\label{e:klm2012-03-14E}
\|u_n+Bx\|
&=\frac{1}{\gamma_n}\|U_n^{-1}(x_n-q_n-B_nx_n+B_nx)\|
\nonumber\\
&\leq\frac{1}{\varepsilon\alpha}\|x_n-q_n-B_nx_n+B_nx\|
\nonumber\\
&\leq\frac{\sqrt{\mu}}{\varepsilon\alpha}
\|x_n-q_n-B_nx_n+B_nx\|_{U_n^{-1}}
\nonumber\\
&\to 0.
\end{align}
Moreover, it follows from \eqref{e:2}, \eqref{e:palawan2012-03-09},
and \eqref{e:klm2012-03-14z} that
\begin{align}
\label{e:klm2012-03-14e}
\|x_n-q_n\|
&\leq\|x_n-q_n-B_nx_n+B_nx\|+\|B_nx_n-B_nx\|\nonumber\\
&\leq\|x_n-q_n-B_nx_n+B_nx\|+\gamma_n\|U_n\|\,\|Bx_n-Bx\|
\nonumber\\
&\leq\|x_n-q_n-B_nx_n+B_nx\|+(2\beta-\varepsilon)
\|Bx_n-Bx\|\nonumber\\
&\to 0
\end{align}
and, therefore, since $x_{k_n}\weakly x$, that $q_{k_n}\weakly x$. 
To sum up,
\begin{equation}
\label{e:klm2012-03-14f}
q_{k_n}\weakly x,\quad u_{k_n}\to -Bx,\quad\text{and}\quad
(\forall n\in\NN)\quad (q_{k_n},u_{k_n})\in\gra A.
\end{equation}
Hence, using the sequential closedness of $\gra A$ in
$\HH^{\text{weak}}\times\HH^{\text{strong}}$
\cite[Proposition~20.33(ii)]{Livre1}, we conclude that
$-Bx\in Ax$, i.e., that $x\in Z$.

\ref{t:1ii}: Since $\overline{x}\in Z$, the claim follows from 
\eqref{e:balay2012-03-13r}.

\ref{t:1iii}: We now prove strong convergence.

\ref{t:1iiia}: Since $A$ and $B$ are maximally monotone and 
$\dom B=\HH$, $A+B$ is maximally monotone 
\cite[Corollary~24.4(i)]{Livre1} and $Z$ is therefore 
closed \cite[Proposition~23.39]{Livre1}. Hence, the claim follows
from \ref{t:1i}, \eqref{e:ab3}, and Proposition~\ref{p:monodc}.

\ref{t:1iiib}: It follows from \ref{t:1i} and 
\eqref{e:klm2012-03-14e} that $q_n\weakly\overline{x}\in Z$
and from \eqref{e:klm2012-03-14E} that 
$u_n\to -B\overline{x}\in A\overline{x}$. Hence, if $A$ is 
demiregular at $\overline{x}$, \eqref{e:klm2012-03-14d} yields
$q_n\to\overline{x}$. In view of \eqref{e:klm2012-03-14e}, we
conclude that $x_n\to\overline{x}$. Now suppose that $B$ is 
demiregular at $\overline{x}$. Then since 
$x_n\weakly\overline{x}\in Z$ by \ref{t:1i} and 
$Bx_n\to B\overline{x}$ by \ref{t:1ii}, we conclude that
$x_n\to\overline{x}$. 

\ref{t:1iiic}: Suppose that $z\in\inte C$ and fix $\rho\in\RPP$ 
such that $B(z;\rho)\subset C$. It follows from 
\eqref{e:balay2012-03-13i} that 
$\theta=\sup_{x\in B(z;\rho)}\sup_{n\in\NN}
\|x_n-x\|_{U^{-1}_n}\leq(1/\sqrt{\alpha})(\sup_{n\in\NN}
\|x_n-z\|+\sup_{x\in B(z;\rho)}\|x-z\|)<\pinf$ and 
from \eqref{e:balay2012-03-13f} that 
\begin{align}
\label{e:balay2012-03-13ff}
(\forall n\in\NN)(\forall x\in B(z;\rho))\quad
\|x_{n+1}-x\|^2_{U_{n+1}^{-1}}
&\leq\|x_n-x\|_{U^{-1}_n}^2
+\theta^2\eta_n+2\delta\theta\varepsilon_n+\varepsilon_n^2.
\end{align}
Hence, the claim follows from \ref{t:1i}, Lemma~\ref{l:kjMMXII},
and Proposition~\ref{p:2012-03-26}.
\end{proof}

\begin{remark} 
Here are some observations on Theorem~\ref{t:1}.
\begin{enumerate}
\item
Suppose that $(\forall n\in\NN)$ $U_n=\Id$. Then \eqref{e:forward} 
relapses to the forward-backward algorithm studied in 
\cite{Sico10,Opti04}, which itself captures those of 
\cite{Lema97,Merc79,Tsen91}. Theorem~\ref{t:1} extends the
convergence results of these papers. 
\item
As shown in \cite[Remark~5.12]{Smms05}, the convergence of the 
forward-backward iterates to a solution may be only weak and 
not strong, hence the necessity of the additional conditions in 
Theorem~\ref{t:1}\ref{t:1iii}.
\item
In Euclidean spaces, condition \eqref{e:palawan2012-03-09} 
was used in \cite{Pare08} in a variable metric proximal point
algorithm and then in \cite{Loli09} in a more general splitting
algorithm.
\end{enumerate}
\end{remark}

Next, we describe direct applications of Theorem~\ref{t:1},
which yield new variable metric splitting schemes. We start with 
minimization problems, an area in which the forward-backward 
algorithm has found numerous applications, e.g., 
\cite{Banf11,Smms05,Duch09,Tsen90,Tsen91}.

\begin{example}
\label{ex:2012-04-19}
Let $f\in\Gamma_0(\HH)$, let $\alpha\in\RPP$, let $\beta\in\RPP$, 
let $g\colon\HH\to\RR$ be convex and differentiable with a 
$1/\beta$-Lipschitzian gradient, let 
$(\eta_n)_{n\in\NN}\in\ell_+^1(\NN)$, and let $(U_n)_{n\in\NN}$ be 
a sequence in $\BP_{\alpha}(\HH)$ such that
\eqref{e:palawan2012-03-09} holds. Furthermore, let 
$\varepsilon\in\left]0,\min\{1,2\beta/(\mu+1)\}\right]$ where 
$\mu$ is given by \eqref{e:palawan2012-03-09}, let 
$(\lambda_n)_{n\in\NN}$ be a sequence in 
$\left[\varepsilon,1\right]$, let $(\gamma_n)_{n\in\NN}$ be a 
sequence in $[\varepsilon,(2\beta-\varepsilon)/\mu]$,  let
$x_0\in\HH$, and let $(a_n)_{n\in\NN}$ and $(b_n)_{n\in\NN}$ be 
absolutely summable sequences in $\HH$. Suppose that
$\Argmin\:(f+g)\neq\emp$ and set 
\begin{equation}
\label{e:forward6}
(\forall n\in\NN)\quad
\begin{array}{l}
\left\lfloor
\begin{array}{l}
y_n= x_n-\gamma_n U_n (\nabla g(x_n)+b_n)\\[1mm]
x_{n+1}=x_n+\lambda_n\big(\prox_{\gamma_n f}^{U^{-1}_n}\,
y_n+a_n-x_n\big).
\end{array}
\right.\\[2mm]
\end{array}
\end{equation}
Then the following hold for some 
$\overline{x}\in\Argmin\:(f+g)$.
\begin{enumerate}
\item 
\label{ex:2012-04-19i}
$x_n\weakly\overline{x}$.
\item 
\label{ex:2012-04-19ii}
$\sum_{n\in\NN}\|\nabla g(x_n)-\nabla g(\overline{x})\|^2<\pinf$.
\item
\label{ex:2012-04-19iii}
Suppose that one of the following holds.
\begin{enumerate}
\item
$\varliminf d_{\Argmin\:(f+g)}(x_n)=0$.
\item
At every point in $\Argmin\:(f+g)$, $f$ or $g$ is uniformly
convex (see Lemma~\ref{l:2009-09-20}\ref{p:2009-09-20ii+}).
\item
$\inte\Argmin\:(f+g)\neq\emp$ and there exists 
$(\nu_n)_{n\in\NN}\in\ell_+^1(\NN)$ such that 
$(\forall n\in\NN)$ $(1+\nu_n)U_n\succcurlyeq U_{n+1}$.
\end{enumerate}
Then $x_n\to\overline{x}$. 
\end{enumerate}
\end{example}
\begin{proof}
An application of Theorem~\ref{t:1} with $A=\partial f$ and 
$B=\nabla g$, since the Baillon-Haddad theorem
\cite[Corollary~18.16]{Livre1} ensures that $\nabla g$ is 
$\beta$-cocoercive and since, by \cite[Corollary~26.3]{Livre1},
$\Argmin\:(f+g)=\zer(A+B)$.
\end{proof}

The next example addresses variational inequalities, another area 
of application of forward-backward splitting 
\cite{Livre1,Facc03,Tsen90,Tsen91}.

\begin{example}
\label{ex:2012-04-17}
Let $f\in\Gamma_0(\HH)$, let $\alpha\in\RPP$, let $\beta\in\RPP$, 
let $B\colon\HH\to\HH$ be $\beta$-cocoercive, let 
$(\eta_n)_{n\in\NN}\in\ell_+^1(\NN)$, and let $(U_n)_{n\in\NN}$ be
a sequence in $\BP_{\alpha}(\HH)$ that satisfies 
\eqref{e:palawan2012-03-09}. Furthermore,
let $\varepsilon\in\left]0,\min\{1,2\beta/(\mu+1)\}\right]$ where 
$\mu$ is given by \eqref{e:palawan2012-03-09}, let 
$(\lambda_n)_{n\in\NN}$ be a sequence in 
$\left[\varepsilon,1\right]$, let $(\gamma_n)_{n\in\NN}$ be a 
sequence in $[\varepsilon,(2\beta-\varepsilon)/\mu]$,  let
$x_0\in\HH$, and let $(a_n)_{n\in\NN}$ and $(b_n)_{n\in\NN}$ be 
absolutely summable sequences in $\HH$. Suppose that the 
variational inequality
\begin{equation}
\label{e:2012-04-18e}
\text{find}\quad x\in\HH\quad\text{such that}\quad
(\forall y\in\HH)\quad\scal{x-y}{Bx}+f(x)\leq f(y)
\end{equation}
admits at least one solution and set
\begin{equation}
\label{e:vforward}
(\forall n\in\NN)\quad
\begin{array}{l}
\left\lfloor
\begin{array}{l}
y_n= x_n-\gamma_n U_n (Bx_n+b_n)\\[1mm]
x_{n+1}=x_n+\lambda_n\big(\prox_{\gamma_n f}^{U^{-1}_n}\,
y_n+a_n-x_n\big).
\end{array}
\right.\\[2mm]
\end{array}
\end{equation}
Then $(x_n)_{n\in\NN}$ converges weakly to a solution 
$\overline{x}$ to \eqref{e:2012-04-18e}.
\end{example}
\begin{proof}
Set $A=\partial f$ in Theorem~\ref{t:1}\ref{t:1i}.
\end{proof}

\section{Strongly monotone inclusions in duality}
\label{sec:5}
In \cite{Svva10}, strongly convex composite minimization 
problems of the form
\begin{equation}
\label{e:prob1}
\minimize{x\in\HH}f(x)+g(Lx-r)+\frac12\|x-z\|^2,
\end{equation}
where $z\in\HH$, $r\in\GG$, $f\in\Gamma_0(\HH)$, 
$g\in\Gamma_0(\GG)$, and $L\in\BL(\HH,\GG)$, were solved by 
applying the forward-backward algorithm to the Fenchel-Rockafellar 
dual problem
\begin{equation}
\label{e:prob2}
\minimize{v\in\GG}\widetilde{f^*}(z-L^*v)+g^*(v)+\scal{v}{r},
\end{equation}
where $\widetilde{f^*}=f^*\infconv(\|\cdot\|^2/2)$ denotes the 
Moreau envelope of $f^*$. This framework was shown to capture and
extend various formulations in areas such as sparse signal recovery, 
best approximation theory, and inverse problems. In this section, 
we use the results of Section~\ref{sec:4} to generalize this 
framework in several directions simultaneously. 
First, we consider general monotone inclusions, not just 
minimization problems. Second, we incorporate parallel sum 
components (see \eqref{e:parasum}) in the model. Third, our 
algorithm allows for a variable metric. The following problem is 
formulated using the duality framework of \cite{Svva12}, which 
itself extends those of 
\cite{Atto96,Ecks99,Mosc72,Penn00,Robi99,Rock67}.

\begin{problem}
\label{prob:5.1}
Let $z\in\HH$, let $\rho\in\RPP$, let $A\colon\HH\to 2^{\HH}$ be 
maximally monotone, and let $m$ be a strictly positive integer. 
For every $i\in\{1,\ldots,m\}$, let $r_i\in\GG_i$, let 
$B_i\colon\GG_i\to 2^{\GG_i}$ be maximally monotone, let
$\nu_i\in\RPP$, let $D_i\colon\GG_i\to 2^{\GG_i}$ be maximally 
monotone and $\nu_i$-strongly monotone, and suppose that 
$0\neq L_i\in\BL(\HH,\GG_i)$. Furthermore, suppose that
\begin{equation}
\label{e:2012-03-26a}
z\in\ran\bigg(A+\sum_{i=1}^mL_i^*\big((B_i\infconv D_i)
(L_i\cdot-r_i)\big)+\rho\Id\bigg).
\end{equation}
The problem is to solve the primal inclusion
\begin{equation}
\label{e:fprimal}
\text{find}\;\;\overline{x}\in\HH\;\;\text{such that}\;\;
z\in A\overline{x}+\sum_{i=1}^mL_i^*\big((B_i\infconv D_i)
(L_i\overline{x}-r_i)\big)+\rho\overline{x},
\end{equation}
together with the dual inclusion
\begin{multline}
\label{e:fdual}
\text{find}\;\;\overline{v_1}\in\GG_1,\:\ldots,\:
\overline{v_m}\in\GG_m\;\:\text{such that}\\
(\forall i\in\{1,\ldots,m\})\quad r_i\in
L_i\bigg(J_{\rho^{-1}A}\bigg(\rho^{-1}\bigg(
z-\sum_{j=1}^mL_j^*\overline{v_j}\bigg)\bigg)\bigg)-
B_i^{-1}\overline{v_i}-D_i^{-1}\overline{v_i}.
\end{multline}
\end{problem}

Let us start with some properties of Problem~\ref{prob:5.1}.

\begin{proposition}
\label{p:2012-03-20}
In Problem~\ref{prob:5.1}, set
\begin{equation}
\label{e:2012-03-21a}
\overline{x}=J_{\rho^{-1}M}\big(\rho^{-1}z\big),
\quad\text{where}\quad
M=A+\sum_{i=1}^mL_i^*\circ(B_i\infconv D_i)\circ(L_i\cdot-r_i).
\end{equation}
Then the following hold.
\begin{enumerate}
\item
\label{p:2012-03-20i}
$\overline{x}$ is the unique solution to the primal problem 
\eqref{e:fprimal}.
\item
\label{p:2012-03-20ii}
The dual problem \eqref{e:fdual} admits at least one solution.
\item
\label{p:2012-03-20iii}
Let $(\overline{v_1},\ldots,\overline{v_m})$ be a solution to 
\eqref{e:fdual}. Then 
$\overline{x}=J_{\rho^{-1}A}\big(\rho^{-1}\big(z-
\sum_{i=1}^mL_i^*\overline{v_i}\big)\big)$.
\item
\label{p:2012-03-20iv}
Condition \eqref{e:2012-03-26a} is satisfied for every $z$ 
in $\HH$ if and only if $M$ is maximally monotone. 
This is true when one of the following holds.
\begin{enumerate}
\item
\label{p:2012-03-20iva}
The conical hull of 
\begin{equation}
\label{e:2012-03-28b}
E=\Menge{\big(L_ix-r_i-v_i\big)_{1\leq i\leq m}}
{x\in\dom A\;\text{and}\;(v_i)_{1\leq i\leq m}
\in\underset{i=1}{\overset{m}{\cart}}
\ran\big(B_i^{-1}+D_i^{-1}\big)}
\end{equation}
is a closed vector subspace.
\item
\label{p:2012-03-20ivb}
$A=\partial f$ for some $f\in\Gamma_0(\HH)$, for every 
$i\in\{1,\ldots,m\}$, $B_i=\partial g_i$ for some 
$g_i\in\Gamma_0(\GG_i)$ and $D_i=\partial\ell_i$ for some
strongly convex function $\ell_i\in\Gamma_0(\GG_i)$, and
one of the following holds.
\begin{enumerate}
\item
\label{p:heidelberg2011-07-06i}
$(r_1,\ldots,r_m)\in$ $\sri\big\{(L_ix-y_i)_{1\leq i\leq m}\:\mid\:
x\in\dom f\;\text{and}\;\\
~\hfill (\forall i\in\{1,\ldots,m\})\;\:y_i\in
\dom g_i+\dom\ell_i\big\}$.
\item
\label{p:heidelberg2011-07-06ii}
For every $i\in\{1,\ldots,m\}$, $g_i$ or $\ell_i$ is real-valued.
\item
\label{p:heidelberg2011-07-06iii}
$\HH$ and $(\GG_i)_{1\leq i\leq m}$ are finite-dimensional, 
and there exists $x\in\reli\dom f$ such that 
\begin{equation}
\label{e:gennad2011-08-06b}
(\forall i\in\{1,\ldots,m\})\quad
L_ix-r_i\in\reli\dom g_i+\reli\dom\ell_i.
\end{equation}
\end{enumerate}
\end{enumerate}
\end{enumerate}
\end{proposition}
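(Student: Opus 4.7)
The plan is to handle the four items in the order (i), (iii), (ii), (iv), since (iii) is a convenient intermediate step.

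For (i), I would first observe that $M$ is monotone: each $B_i^{-1}+D_i^{-1}$ is monotone as a sum of monotones, so $(B_i\infconv D_i)=(B_i^{-1}+D_i^{-1})^{-1}$ is monotone, and pre/post-composition by $L_i$ and $L_i^*$ together with a translation preserves monotonicity, while $A$ is maximally monotone. Therefore $M+\rho\Id$ is $\rho$-strongly monotone and so injective. Combined with the range hypothesis \eqref{e:2012-03-26a}, the inclusion $z\in(M+\rho\Id)\overline{x}$ has exactly one solution. Rewriting $z\in M\overline{x}+\rho\overline{x}$ as $\rho^{-1}z\in\overline{x}+\rho^{-1}M\overline{x}$ and inverting gives $\overline{x}=J_{\rho^{-1}M}(\rho^{-1}z)$, matching \eqref{e:2012-03-21a}.

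For (iii), given a dual solution $(\overline{v_i})_{1\le i\le m}$, set $\overline{y}=J_{\rho^{-1}A}\bigl(\rho^{-1}(z-\sum_i L_i^*\overline{v_i})\bigr)$. Decoding the resolvent yields $z-\sum_i L_i^*\overline{v_i}\in A\overline{y}+\rho\overline{y}$. Rewriting \eqref{e:fdual} gives $L_i\overline{y}-r_i\in B_i^{-1}\overline{v_i}+D_i^{-1}\overline{v_i}=(B_i\infconv D_i)^{-1}\overline{v_i}$, hence $\overline{v_i}\in(B_i\infconv D_i)(L_i\overline{y}-r_i)$. Summing and adding shows $z\in(M+\rho\Id)\overline{y}$, and uniqueness from (i) forces $\overline{y}=\overline{x}$. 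For (ii), start from the inclusion $z-\rho\overline{x}\in A\overline{x}+\sum_i L_i^*(B_i\infconv D_i)(L_i\overline{x}-r_i)$ satisfied by the primal solution, extract $\overline{v_i}\in(B_i\infconv D_i)(L_i\overline{x}-r_i)$ with $z-\rho\overline{x}-\sum_i L_i^*\overline{v_i}\in A\overline{x}$, and reverse the calculation of (iii) to verify that $(\overline{v_1},\ldots,\overline{v_m})$ solves \eqref{e:fdual}.

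For (iv), the equivalence follows from Minty's theorem. If $M$ is maximally monotone, then $M+\rho\Id$ is maximally monotone and strongly monotone, so by Minty $\ran(M+\rho\Id)=\HH$, which is \eqref{e:2012-03-26a} for every $z$. Conversely, $\ran(M+\rho\Id)=\HH$ means $\ran(\Id+\rho^{-1}M)=\HH$, and since $M$ is monotone, Minty's theorem implies $\rho^{-1}M$, hence $M$, is maximally monotone. For sufficiency of (a), recast $M$ in the primal-dual product space $\HHH\oplus\GGG$: the operator $\sum_i L_i^*\circ(B_i\infconv D_i)\circ(L_i\cdot-r_i)$ equals $\boldsymbol{L}^*\circ(\boldsymbol{B}\infconv\boldsymbol{D})\circ(\boldsymbol{L}\cdot-\boldsymbol{r})$, and the condition that the conical hull of $E$ in \eqref{e:2012-03-28b} is a closed vector subspace is precisely the qualification condition under which the standard maximal monotonicity result for parallel sums composed with bounded linear operators applies (as in \cite{Svva12}); adding the maximally monotone $A$ with $\dom A$ accounted for in $E$ then gives maximal monotonicity of $M$. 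Under (b), $M=\partial f+\sum_i L_i^*\partial(g_i\infconv\ell_i)(L_i\cdot-r_i)$ after using $(B_i\infconv D_i)=\partial(g_i\infconv\ell_i)$; each of (b.i)--(b.iii) is a classical qualification condition (strong relative interior, real-valuedness, relative-interior in finite dimension) guaranteeing that this equals $\partial\bigl(f+\sum_i(g_i\infconv\ell_i)(L_i\cdot-r_i)\bigr)$ and is therefore maximally monotone as the subdifferential of a function in $\Gamma_0(\HH)$.

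The main obstacle is (iv)(a): the other parts are essentially algebraic bookkeeping with the resolvent and the definition of the parallel sum, but (iv)(a) requires invoking the precise product-space maximal monotonicity theorem from \cite{Svva12} with the right qualification condition and checking that the set $E$ in \eqref{e:2012-03-28b} corresponds exactly to the hypothesis of that theorem when $A$ is grafted in via its domain.
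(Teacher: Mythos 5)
Your treatment of (i)--(iii) and of the Minty equivalence in (iv) matches the paper's proof: monotonicity of $M$ makes $J_{\rho^{-1}M}$ single-valued with domain $\ran(\Id+\rho^{-1}M)$, condition \eqref{e:2012-03-26a} puts $\rho^{-1}z$ in that domain, and your primal--dual bookkeeping for (ii) and (iii) is exactly the paper's chain of equivalences combined with uniqueness from (i). For (iv)(b) the paper likewise simply cites \cite[Proposition~4.3]{Svva12}.

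The gap is in (iv)(a), and it sits precisely in the part you flagged as the obstacle. The qualification that $\cone E$ be a closed vector subspace, with $E=\boldsymbol{L}(\dom A)-\dom\boldsymbol{B}$, is the hypothesis of the sum-and-composition rule (the paper invokes \cite[Section~24]{Botr10}, working in $\GGG=\GG_1\oplus\cdots\oplus\GG_m$ rather than in the primal--dual space $\HH\oplus\GGG$ you mention): it lets you conclude that $M=A+\boldsymbol{L}^*\circ\boldsymbol{B}\circ\boldsymbol{L}$ is maximally monotone \emph{provided} the inner operator $\boldsymbol{B}\colon\boldsymbol{y}\mapsto\big((B_i\infconv D_i)(y_i-r_i)\big)_{1\leq i\leq m}$ is itself maximally monotone. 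Your sketch never establishes this, and it is not automatic: the parallel sum of two maximally monotone operators need not be maximally monotone, and the set $E$ in \eqref{e:2012-03-28b} only controls the outer sum and composition, not the parallel sums. This is exactly where the standing hypothesis that each $D_i$ is $\nu_i$-strongly monotone must enter: it gives $\dom D_i^{-1}=\ran D_i=\GG_i$ by \cite[Proposition~22.8(ii)]{Livre1}, so $B_i^{-1}+D_i^{-1}$ is maximally monotone by the full-domain sum rule \cite[Corollary~24.4(i)]{Livre1}, and hence so is its inverse $B_i\infconv D_i$. Without this step the appeal to the composition theorem is unjustified; with it, your argument coincides with the paper's.
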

\begin{proof}
\ref{p:2012-03-20i}: It follows from our assumptions and
\cite[Proposition~20.10]{Livre1} that $\rho^{-1}M$ is a monotone 
operator. Hence, $J_{\rho^{-1}M}$ is a single-valued operator with
domain $\ran(\Id+\rho^{-1}M)$ \cite[Proposition~23.9(ii)]{Livre1}.
Moreover, \eqref{e:2012-03-26a} $\Leftrightarrow$ 
$\rho^{-1}z\in\ran(\Id+\rho^{-1}M)=\dom J_{\rho^{-1}M}$, and,
in view of \eqref{e:resolvent}, the inclusion in \eqref{e:fprimal} 
is equivalent to $\overline{x}=J_{\rho^{-1}M}(\rho^{-1}z)$.

\ref{p:2012-03-20ii}\&\ref{p:2012-03-20iii}: It follows from 
\eqref{e:resolvent} and \eqref{e:parasum} that
\begin{eqnarray}
\label{e:2012-03-28a}
\ref{p:2012-03-20i}
&\Leftrightarrow&(\exi\overline{v_1}\in\GG_1)\cdots
(\exi\overline{v_m}\in\GG_m)\quad
\begin{cases}
(\forall i\in\{1,\ldots,m\})\quad
\overline{v_i}\in(B_i\infconv D_i)(L_i\overline{x}-r_i)\\
z-\sum_{i=1}^mL_i^*\overline{v_i}\in A\overline{x}+\rho\overline{x}
\end{cases}\nonumber\\
&\Leftrightarrow&(\exi\overline{v_1}\in\GG_1)\cdots
(\exi\overline{v_m}\in\GG_m)\quad
\begin{cases}
(\forall i\in\{1,\ldots,m\})\quad
r_i\in L_i\overline{x}-B_i^{-1}\overline{v_i}-
D_i^{-1}\overline{v_i}\\
\overline{x}=J_{\rho^{-1}A}\big(\rho^{-1}\big(z-\sum_{j=1}^m
L_j^*\overline{v_j}\big)\big)
\end{cases}\nonumber\\
&\Leftrightarrow&
\begin{cases}
(\overline{v_1},\ldots,\overline{v_m})\;\text{solves}\;
\eqref{e:fdual}\\
\overline{x}=J_{\rho^{-1}A}\big(\rho^{-1}\big(z-\sum_{j=1}^m
L_j^*\overline{v_j}\big)\big).
\end{cases}
\end{eqnarray}

\ref{p:2012-03-20iv}: 
It follows from Minty's theorem \cite[Theorem~21.1]{Livre1}, 
that $M+\rho\Id$ is surjective if and only if $M$ is 
maximally monotone. 

\ref{p:2012-03-20iva}: 
Using Notation~\ref{n:palawan-mai2008-}, let us set
\begin{equation}
\label{e:2012-03-30a}
{\boldsymbol L}\colon\HH\to\GGG\colon{x}\mapsto
\big(L_ix\big)_{1\leq i\leq m}\quad\text{and}\quad
{\boldsymbol B}\colon\GGG\to2^{\GGG}\colon\boldsymbol{y}\mapsto
\big((B_i\infconv D_i)(y_i-r_i)\big)_{1\leq i\leq m}.
\end{equation}
Then it follows from \eqref{e:2012-03-21a} that
$M=A+\boldsymbol{L}^*\circ\boldsymbol{B}\circ\boldsymbol{L}$
and from \eqref{e:2012-03-28b} that 
$E=\boldsymbol{L}(\dom A)-\dom\boldsymbol{B}$. Hence, since 
$\cone(E)=\spc(E)$, in view of \cite[Section~24]{Botr10},
to conclude that $M$ is maximally monotone, it is enough to
show that ${\boldsymbol B}$ is.
For every $i\in\{1,\ldots,m\}$, since $D_i$ is maximally monotone 
and strongly monotone, $\dom D_i^{-1}=\ran D_i=\GG_i$ 
\cite[Proposition~22.8(ii)]{Livre1} and it follows from
\cite[Proposition~20.22~\&~Corollary~24.4(i)]{Livre1} that 
$B_i\infconv D_i$ is maximally monotone. This shows that 
${\boldsymbol B}$ is maximally monotone.

\ref{p:2012-03-20ivb}: This follows from 
\cite[Proposition~4.3]{Svva12}.
\end{proof}

\begin{remark}
\label{r:2012-04-23}
In connection with
Proposition~\ref{p:2012-03-20}\ref{p:2012-03-20iv}, let us note 
that even in the simple setting of normal cone operators in finite 
dimension, some constraint qualification is required to ensure the 
existence of a primal solution for every 
$z\in\HH$. To see this, 
suppose that, in Problem~\ref{prob:5.1}, $\HH$ is the Euclidean 
plane, $m=1$, $\rho=1$, $\GG_1=\HH$, $L_1=\Id$, 
$z=(\zeta_1,\zeta_2)$, $r_1=0$, $D_1=\{0\}^{-1}$, $A=N_C$, and 
$B_1=N_{K}$, where 
$C=\menge{(\xi_1,\xi_2)\in\RR^2}{(\xi_1-1)^2+\xi_2^2\leq 1}$ and
$K=\menge{(\xi_1,\xi_2)\in\RR^2}{\xi_1\leq 0}$. Then 
$\dom(A+B_1+\Id)=\dom A\cap\dom B_1=C\cap K=\{0\}$ and the primal 
inclusion $z\in A\overline{x}+B_1\overline{x}+\overline{x}$
reduces to $(\zeta_1,\zeta_2)\in N_C0+N_{K}0=
\RM\times\{0\}+\RP\times\{0\}=\RR\times\{0\}$, which has no 
solution if $\zeta_2\neq 0$. Here 
$\cone(\dom A-\dom B_1)=\cone(C-K)=-K$ is not a vector subspace.
\end{remark}

In the following result we derive from Theorem~\ref{t:1} a
parallel primal-dual algorithm for solving Problem~\ref{prob:5.1}.

\begin{corollary}
\label{c:2}
In Problem~\ref{prob:5.1}, set
\begin{equation}
\label{e:beta1}
\beta=\frac{1}{\displaystyle{\max_{1\leq i\leq m}\:}
\frac{1}{\nu_i}+\frac{1}{\rho}
\displaystyle{\displaystyle{\sum_{1\leq i\leq m}}\|L_i\|^2}}.
\end{equation}
Let $(a_n)_{n\in\NN}$ be an absolutely summable sequence in $\HH$,
let $\alpha\in\RPP$, and let $(\eta_n)_{n\in\NN}\in\ell_+^1(\NN)$.
For every $i\in\{1,\ldots,m\}$, let $v_{i,0}\in\GG_i$, let 
$(b_{i,n})_{n\in\NN}$ and $(d_{i,n})_{n\in\NN}$ be absolutely 
summable sequences in $\GG_i$, and let $(U_{i,n})_{n\in\NN}$ be 
a sequence in $\BP_{\alpha}(\GG_i)$. Suppose that
\begin{equation}
\label{e:2012-04-08d}
\mu=\max_{1\leq i\leq m}\sup_{n\in\NN}\|U_{i,n}\|<\pinf
\quad\text{and}\quad(\forall i\in\{1,\ldots,m\})(\forall n\in\NN)
\quad(1+\eta_n)U_{i,n+1}\succcurlyeq U_{i,n}.
\end{equation}
Let $\varepsilon\in\left]0,\min\{1,2\beta/(\mu+1)\}\right]$,
let $(\lambda_n)_{n\in\NN}$ be a sequence in 
$\left[\varepsilon,1\right]$, and let $(\gamma_n)_{n\in\NN}$ be a 
sequence in $[\varepsilon,(2\beta-\varepsilon)/\mu]$. Set 
\begin{equation}
\label{e:2012-04-08a}
(\forall n\in\NN)\quad
\left\lfloor
\begin{array}{l}
s_n=z-\sum_{i=1}^mL_i^*v_{i,n}\\
x_n=J_{\rho^{-1}A}(\rho^{-1}s_n)+a_n\\
\operatorname{For}\;i=1,\ldots,m\\
\left\lfloor
\begin{array}{l}
w_{i,n}=v_{i,n}+\gamma_nU_{i,n}\big(L_ix_n-r_i-D_i^{-1}v_{i,n}
-d_{i,n}\big)\\[2mm]
v_{i,n+1}=v_{i,n}+\lambda_n\Big(J_{\gamma_nU_{i,n}B_i^{-1}}
(w_{i,n})+b_{i,n}-v_{i,n}\Big).\\[1mm] 
\end{array}
\right.\\[2mm]
\end{array}
\right.
\end{equation}
Then the following hold for the solution $\overline{x}$ to 
\eqref{e:fprimal} and for some solution 
$(\overline{v_1},\ldots,\overline{v_m})$ to \eqref{e:fdual}.
\begin{enumerate}
\item
\label{c:2i}
$(\forall i\in\{1,\ldots,m\})$ $v_{i,n}\weakly\overline{v_i}$. 
In addition, 
$\overline{x}=J_{\rho^{-1}A}\big(\rho^{-1}\big(z-
\sum_{i=1}^mL_i^*\overline{v_i}\big)\big)$.
\item
\label{c:2ii}
$x_n\to\overline{x}$.
\end{enumerate}
\end{corollary}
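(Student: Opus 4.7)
The plan is to recast \eqref{e:fdual} as a single monotone inclusion on the direct sum $\GGG=\GG_1\oplus\cdots\oplus\GG_m$ (cf.\ Notation~\ref{n:palawan-mai2008-}) and then to apply Theorem~\ref{t:1}. Introduce
\[
\boldsymbol{L}\colon\HH\to\GGG\colon x\mapsto(L_ix)_{1\le i\le m},\qquad
\boldsymbol{A}\colon\boldsymbol{v}\mapsto\cart_{i=1}^{m}B_i^{-1}v_i,
\]
\[
\boldsymbol{B}\colon\boldsymbol{v}\mapsto
\Big(D_i^{-1}v_i+r_i-L_iJ_{\rho^{-1}A}\big(\rho^{-1}(z-\boldsymbol{L}^*\boldsymbol{v})\big)\Big)_{1\le i\le m},\qquad
\boldsymbol{U}_n\colon\boldsymbol{v}\mapsto(U_{i,n}v_i)_{1\le i\le m}.
\]
With this notation, \eqref{e:fdual} reads $\boldsymbol{0}\in\boldsymbol{A}\overline{\boldsymbol{v}}+\boldsymbol{B}\overline{\boldsymbol{v}}$, and Proposition~\ref{p:2012-03-20}\ref{p:2012-03-20ii} guarantees that $\zer(\boldsymbol{A}+\boldsymbol{B})\neq\emp$. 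Moreover $\boldsymbol{A}$ is maximally monotone as a Cartesian product of maximally monotone operators, and \eqref{e:2012-04-08d} transfers coordinatewise to yield $\boldsymbol{U}_n\in\BP_\alpha(\GGG)$ with $\sup_n\|\boldsymbol{U}_n\|=\mu$ and $(1+\eta_n)\boldsymbol{U}_{n+1}\succcurlyeq\boldsymbol{U}_n$.

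The cocoercivity of $\boldsymbol{B}$ with the exact constant $\beta$ of \eqref{e:beta1} is the main analytic ingredient. Decompose $\boldsymbol{B}=\boldsymbol{D}+\boldsymbol{R}$, where $\boldsymbol{D}\boldsymbol{v}=(D_i^{-1}v_i)_i$ and $\boldsymbol{R}\boldsymbol{v}=(r_i-L_iJ_{\rho^{-1}A}(\rho^{-1}(z-\boldsymbol{L}^*\boldsymbol{v})))_i$. Since each $D_i$ is $\nu_i$-strongly monotone, each $D_i^{-1}$ is $\nu_i$-cocoercive, and a coordinatewise computation shows that $\boldsymbol{D}$ is $(\min_i\nu_i)$-cocoercive. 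For $\boldsymbol{R}$, setting $\boldsymbol{u}=\rho^{-1}(z-\boldsymbol{L}^*\boldsymbol{v})$ and $\boldsymbol{u}'=\rho^{-1}(z-\boldsymbol{L}^*\boldsymbol{v}')$ and using the firm nonexpansiveness of $J_{\rho^{-1}A}$ together with $\|\boldsymbol{L}\|^2\le\sum_i\|L_i\|^2$ produces cocoercivity with constant $\rho/\sum_i\|L_i\|^2$. Proposition~\ref{p:samurai11}, applied with $L_1=L_2=\Id$ to the sum $\boldsymbol{D}+\boldsymbol{R}$, then delivers the cocoercivity constant $\beta$ prescribed by \eqref{e:beta1}.

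Next, identify \eqref{e:2012-04-08a} with the iteration \eqref{e:forward} applied to $(\boldsymbol{A},\boldsymbol{B},\boldsymbol{U}_n)$. By \eqref{e:noumea2010-09-02a}, $J_{\gamma_n\boldsymbol{U}_n\boldsymbol{A}}$ factorises coordinatewise into the resolvents $J_{\gamma_nU_{i,n}B_i^{-1}}$, so the outer update matches with perturbation $\boldsymbol{a}_n=(b_{i,n})_{1\le i\le m}$. Setting $s_n=z-\boldsymbol{L}^*\boldsymbol{v}_n$ and $\widetilde{x}_n=J_{\rho^{-1}A}(\rho^{-1}s_n)$, the identity $x_n=\widetilde{x}_n+a_n$ gives $L_ix_n=(\boldsymbol{L}\widetilde{x}_n)_i+L_ia_n$, so the forward step of \eqref{e:2012-04-08a} coincides with the forward step of \eqref{e:forward} modulo the error $\boldsymbol{b}_n=(d_{i,n}-L_ia_n)_{1\le i\le m}$. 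Both $\boldsymbol{a}_n$ and $\boldsymbol{b}_n$ are absolutely summable. Theorem~\ref{t:1}\ref{t:1i} then delivers $\boldsymbol{v}_n\weakly\overline{\boldsymbol{v}}\in\zer(\boldsymbol{A}+\boldsymbol{B})$, whose components solve \eqref{e:fdual}, and Proposition~\ref{p:2012-03-20}\ref{p:2012-03-20iii} identifies the associated $\overline{x}$; this proves \ref{c:2i}. Theorem~\ref{t:1}\ref{t:1ii} further provides $\boldsymbol{B}\boldsymbol{v}_n\to\boldsymbol{B}\overline{\boldsymbol{v}}$ strongly.

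The main obstacle is \ref{c:2ii}: the strong convergence $x_n\to\overline{x}$ does not follow from mere weak convergence of $\boldsymbol{v}_n$. I exploit the $\rho$-strong monotonicity implicit on the primal side. From $s_n-\rho\widetilde{x}_n\in A\widetilde{x}_n$, $\overline{s}-\rho\overline{x}\in A\overline{x}$ with $\overline{s}=z-\sum_iL_i^*\overline{v_i}$, and monotonicity of $A$, I obtain
\[
\rho\|\widetilde{x}_n-\overline{x}\|^2\leq\scal{\widetilde{x}_n-\overline{x}}{s_n-\overline{s}}
=-\sum_{i=1}^{m}\scal{L_i(\widetilde{x}_n-\overline{x})}{v_{i,n}-\overline{v_i}}.
\]
Substituting $L_i(\widetilde{x}_n-\overline{x})=(D_i^{-1}v_{i,n}-D_i^{-1}\overline{v_i})-(\boldsymbol{B}\boldsymbol{v}_n-\boldsymbol{B}\overline{\boldsymbol{v}})_i$ and using monotonicity of each $D_i^{-1}$ bounds the right-hand side above by $|||\boldsymbol{B}\boldsymbol{v}_n-\boldsymbol{B}\overline{\boldsymbol{v}}|||\cdot|||\boldsymbol{v}_n-\overline{\boldsymbol{v}}|||$, which tends to $0$ since $(\boldsymbol{v}_n)_{n\in\NN}$ is bounded and $\boldsymbol{B}\boldsymbol{v}_n\to\boldsymbol{B}\overline{\boldsymbol{v}}$ strongly. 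Therefore $\widetilde{x}_n\to\overline{x}$, and $a_n\to 0$ yields $x_n\to\overline{x}$, completing \ref{c:2ii}.
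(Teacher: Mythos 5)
Your proposal is correct and follows essentially the same route as the paper: the same dual product-space reformulation $\boldsymbol{0}\in\boldsymbol{A}\overline{\boldsymbol{v}}+\boldsymbol{B}\overline{\boldsymbol{v}}$ with $\boldsymbol{B}=\boldsymbol{D}-\boldsymbol{L}T\boldsymbol{L}^*$, the same use of Proposition~\ref{p:samurai11} to obtain the cocoercivity constant $\beta$, the same identification of \eqref{e:2012-04-08a} with \eqref{e:forward} via the errors $(b_{i,n})_i$ and $(d_{i,n}-L_ia_n)_i$, and the same strong-convergence argument for \ref{c:2ii} combining the $\rho$-cocoercivity of the resolvent map with the monotonicity of the $D_i^{-1}$ and Theorem~\ref{t:1}\ref{t:1ii}. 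The only cosmetic differences are that you derive the cocoercivity of $-T$ by hand from the monotonicity of $A$ rather than citing Lemma~\ref{l:maxmon45}\ref{l:maxmon45ii}, and you apply Proposition~\ref{p:samurai11} in two stages instead of once with $L_1=\Id$, $L_2=\boldsymbol{L}^*$.
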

\begin{proof}
For every $i\in\{1,\ldots,m\}$, since $D_i$ is maximally monotone 
and $\nu_i$-strongly monotone, $D_i^{-1}$ is $\nu_i$-cocoercive
with $\dom D_i^{-1}=\ran D_i=\GG_i$ 
\cite[Proposition~22.8(ii)]{Livre1}. Let us define 
$\GGG$ as in Notation~\ref{n:palawan-mai2008-}, 
and let us introduce the operators
\begin{equation}
\label{e:2012-04-01a}
\begin{cases}
T\colon\HH\to\HH\colon x\mapsto J_{\rho^{-1}A}
\big(\rho^{-1}(z-x)\big)\\
{\boldsymbol A}\colon\GGG\to2^{\GGG}\colon\boldsymbol{v}\mapsto
\big(B^{-1}_iv_i\big)_{1\leq i\leq m}\\[1mm]
{\boldsymbol D}\colon\GGG\to\GGG\colon\boldsymbol{v}\mapsto
\big(r_i+D^{-1}_iv_i\big)_{1\leq i\leq m}\\[1mm]
{\boldsymbol L}\colon\HH\to\GGG\colon x\mapsto
\big(L_ix\big)_{1\leq i\leq m}
\end{cases}
\end{equation}
and
\begin{equation}
\label{e:2012-04-01b}
(\forall n\in\NN)\quad
{\boldsymbol U}_n\colon\GGG\to\GGG\colon{\boldsymbol v}
\mapsto\big(U_{i,n}v_{i}\big)_{1\leq i\leq m}.
\end{equation}

\ref{c:2i}:
In view of \eqref{e:palawan-mai2008-} and \eqref{e:2012-04-01a}, 
\begin{equation}
\label{e:2012-04-08g}
{\boldsymbol A}\;\;\text{is maximally monotone},
\end{equation}
${\boldsymbol D}$ is ($\min_{1\leq i\leq m}\nu_i$)-cocoercive,
Lemma~\ref{l:maxmon45}\ref{l:maxmon45ii} implies that 
\begin{equation}
\label{e:T}
-T\;\text{is $\rho$-cocoercive}, 
\end{equation}
while $\|{\boldsymbol L}\|^2\leq\sum_{i=1}^m\|L_i\|^2$. 
Hence, we derive from \eqref{e:beta1} and 
Proposition~\ref{p:samurai11} that
\begin{equation}
\label{e:2012-04-08f}
{\boldsymbol B}={\boldsymbol D}-{\boldsymbol L}T{\boldsymbol L}^*
\;\;\text{is}\;\;\beta\text{-cocoercive}.
\end{equation}
Moreover, it follows from \eqref{e:2012-04-08d}, 
\eqref{e:2012-04-01b}, and \eqref{e:palawan-mai2008-} that
\begin{equation}
\label{e:palawan2012-04-09a}
\sup_{n\in\NN}\|{\boldsymbol U}_n\|=\mu\quad\text{and}\quad
(\forall n\in\NN)\quad (1+\eta_n){\boldsymbol U}_{n+1}\succcurlyeq
{\boldsymbol U}_n\in\BP_\alpha(\GGG).
\end{equation}
Now set
\begin{equation}
\label{e:2012-04-08h}
(\forall n\in\NN)\quad 
\begin{cases}
{\boldsymbol a}_n=\big(b_{i,n}\big)_{1\leq i\leq m}\\
{\boldsymbol b}_n=
\big(d_{i,n}-L_ia_n\big)_{1\leq i\leq m}\\
{\boldsymbol v}_n=\big(v_{i,n}\big)_{1\leq i\leq m}\\
{\boldsymbol w}_n=\big(w_{i,n}\big)_{1\leq i\leq m}.
\end{cases}
\end{equation}
Then $\sum_{n\in\NN}|||{\boldsymbol a}_n|||<\pinf$,
$\sum_{n\in\NN}|||{\boldsymbol b}_n|||<\pinf$, and
\eqref{e:2012-04-08a} can be rewritten as 
\begin{equation}
\label{e:2012-04-08c}
(\forall n\in\NN)\quad
\begin{array}{l}
\left\lfloor
\begin{array}{l}
{\boldsymbol w}_n={\boldsymbol v}_n-\gamma_n {\boldsymbol U}_n
({\boldsymbol B}{\boldsymbol v}_n+{\boldsymbol b}_n)\\[1mm]
{\boldsymbol v}_{n+1}={\boldsymbol v}_n+\lambda_n
\big(J_{\gamma_n{\boldsymbol U}_n{\boldsymbol A}}\,
({\boldsymbol w}_n)+{\boldsymbol a}_n-{\boldsymbol v}_n\big).
\end{array}
\right.\\[2mm]
\end{array}
\end{equation}
Furthermore, the dual problem \eqref{e:fdual} is equivalent to
\begin{equation}
\label{e:2012-04-08b}
\text{find}\quad\overline{{\boldsymbol v}}\in\GGG\quad
\text{such that}\quad
{\boldsymbol 0}\in{\boldsymbol A}\overline{{\boldsymbol v}}+
{\boldsymbol B}\overline{{\boldsymbol v}}
\end{equation}
which, in view of \eqref{e:2012-04-08g}, \eqref{e:2012-04-08f}, 
and Proposition~\ref{p:2012-03-20}\ref{p:2012-03-20ii}, 
can be solved using \eqref{e:2012-04-08c}. Altogether,
the claims follow from Theorem~\ref{t:1}\ref{t:1i} and
Proposition~\ref{p:2012-03-20}\ref{p:2012-03-20iii}. 

\ref{c:2ii}: Set $(\forall n\in\NN)$ $z_n=x_n-a_n$. It follows 
from \ref{c:2i}, \eqref{e:2012-04-08a} and \eqref{e:2012-04-01a} 
that
\begin{equation}
\label{e:2012-04-09b}
\overline{x}=T({\boldsymbol L}^*{\overline{\boldsymbol v}})
\quad\text{and}\quad
(\forall n\in\NN)\quad z_n=T({\boldsymbol L}^*
{\boldsymbol v}_n).
\end{equation}
In turn, we deduce from \eqref{e:T}, \ref{c:2i}, 
\eqref{e:2012-04-08f}, and the monotonicity of 
${\boldsymbol D}$ that
\begin{align}
\label{e:2012-04-09c}
\rho\|z_n-\overline{x}\|^2
&=\rho\|T({\boldsymbol L}^*{\boldsymbol v}_n)-
T({\boldsymbol L}^*{\overline{\boldsymbol v}})\|^2
\nonumber\\
&\leq\scal{{\boldsymbol L}^*({\boldsymbol v}_n-
{\overline{\boldsymbol v}})}{T({\boldsymbol L}^*
{\overline{\boldsymbol v}})-T({\boldsymbol L}^*{\boldsymbol v}_n)}
\nonumber\\
&\leq\pscal{{\boldsymbol v}_n-{\overline{\boldsymbol v}}}
{{\boldsymbol L}T({\boldsymbol L}^*{\overline{\boldsymbol v}})
-{\boldsymbol L}T({\boldsymbol L}^*{\boldsymbol v}_n)}
\nonumber\\
&\leq
\pscal{{\boldsymbol v}_n-{\overline{\boldsymbol v}}}
{{\boldsymbol D}{\boldsymbol v}_n-
{\boldsymbol D}\overline{\boldsymbol v}}-
\pscal{{\boldsymbol v}_n-{\overline{\boldsymbol v}}}
{{\boldsymbol L}T({\boldsymbol L}^*{\boldsymbol v}_n)
-{\boldsymbol L}T({\boldsymbol L}^*{\overline{\boldsymbol v}})}
\nonumber\\
&=\pscal{{\boldsymbol v}_n-{\overline{\boldsymbol v}}}
{{\boldsymbol B}{\boldsymbol v}_n-
{\boldsymbol B}{\overline{\boldsymbol v}}}
\nonumber\\
&\leq\delta|||{\boldsymbol B}{\boldsymbol v}_n-
{\boldsymbol B}{\overline{\boldsymbol v}}|||,
\end{align}
where $\delta=\sup_{n\in\NN}|||{\boldsymbol v}_n-
{\overline{\boldsymbol v}}|||<\pinf$ by \ref{c:2i}.
Therefore, it follows from \eqref{e:2012-04-08c} and
Theorem~\ref{t:1}\ref{t:1ii} that $\|z_n-\overline{x}\|\to 0$.
Since $a_n\to 0$, we conclude that $x_n\to\overline{x}$.
\end{proof}

\begin{remark} 
Here are some observations on Corollary~\ref{c:2}. 
\begin{enumerate}
\item
At iteration $n$, the vectors $a_n$, $b_{i,n}$, and $d_{i,n}$
model errors in the implementation of the nonlinear operators.
Note also that, thanks to 
Example~\ref{ex:resolvent}\ref{ex:resolventi}, the computation of 
$v_{i,n+1}$ in \eqref{e:2012-04-08a} can be implemented using 
$J_{\gamma^{-1}_nU^{-1}_{i,n}B_i}$ rather than 
$J_{\gamma_nU_{i,n}B_i^{-1}}$.
\item 
Corollary~\ref{c:2} provides a general algorithm for solving 
strongly monotone composite inclusions which is new even in 
the fixed standard metric case, i.e.,
$(\forall i\in\{1,\ldots,m\})(\forall n\in\NN)$ $U_{i,n}=\Id$.
\end{enumerate}
\end{remark}

The following example describes an application of 
Corollary~\ref{c:2} to strongly convex minimization problems
which extends the primal-dual formulation 
\eqref{e:prob1}--\eqref{e:prob2} of \cite{Svva10} and 
solves it with a variable metric scheme. 
It also extends the framework of
\cite{Jmaa11}, where $f=0$ and $(\forall i\in\{1,\ldots,m\})$
$\ell_i=\iota_{\{0\}}$ and $(\forall n\in\NN)$ $U_{i,n}=\Id$.

\begin{example}
\label{ex:2012-04-11}
Let $z\in\HH$, let $f\in\Gamma_0(\HH)$, let $\alpha\in\RPP$, let 
$(\eta_n)_{n\in\NN}\in\ell_+^1(\NN)$, let $(a_n)_{n\in\NN}$ be 
an absolutely summable sequence in $\HH$, and let $m$ be a 
strictly positive integer. For every $i\in\{1,\ldots,m\}$, let 
$r_i\in\GG_i$, let $g_i\in\Gamma_0(\GG_i)$, let $\nu_i\in\RPP$,
let $\ell_i\in\Gamma_0(\GG_i)$ be $\nu_i$-strongly convex,
let $v_{i,0}\in\GG_i$, let $(b_{i,n})_{n\in\NN}$ and 
$(d_{i,n})_{n\in\NN}$ be absolutely 
summable sequences in $\GG_i$, let $(U_{i,n})_{n\in\NN}$ be 
a sequence in $\BP_{\alpha}(\GG_i)$, and suppose that 
$0\neq L_i\in\BL(\HH,\GG_i)$. Furthermore, suppose that
(see Proposition~\ref{p:2012-03-20}\ref{p:2012-03-20ivb} for
special cases)
\begin{equation}
\label{e:2012-04-18d}
z\in\ran\bigg(\partial f+\sum_{i=1}^mL_i^*(\partial g_i\infconv
\partial\ell_i)(L_i\cdot-r_i)+\Id\bigg).
\end{equation}
The primal problem is to
\begin{equation}
\label{e:primal}
\minimize{x\in\HH}{f(x)+\sum_{i=1}^m\,(g_i\infconv\ell_i)
(L_ix-r_i)+\frac{1}{2}\|x-z\|^2}, 
\end{equation}
and the dual problem is to
\begin{equation}
\label{e:dual}
\minimize{v_1\in\GG_1,\ldots,v_m\in\GG_m}{
\widetilde{f^*}\bigg(z-\sum_{i=1}^mL_i^*v_i\bigg)
+\sum_{i=1}^m\big(g_i^*(v_i)+\ell_i^*(v_i)+\scal{v_i}{r_i}\big)}.
\end{equation}
Suppose that \eqref{e:2012-04-08d} holds, let 
$\varepsilon\in\left]0,\min\{1,2\beta/(\mu+1)\}\right]$,
let $(\lambda_n)_{n\in\NN}$ be a sequence in 
$\left[\varepsilon,1\right]$, and let $(\gamma_n)_{n\in\NN}$ be a 
sequence in $[\varepsilon,(2\beta-\varepsilon)/\mu]$ where 
$\beta$ is defined in \eqref{e:beta1} and $\mu$ in 
\eqref{e:2012-04-08d}. Set
\begin{equation}
\label{e:2012-04-11a}
(\forall n\in\NN)\quad
\left\lfloor
\begin{array}{l}
s_n=z-\sum_{i=1}^mL_i^*v_{i,n}\\
x_n=\prox_fs_n+a_n\\
\operatorname{For}\;i=1,\ldots,m\\
\left\lfloor
\begin{array}{l}
w_{i,n}=v_{i,n}+\gamma_nU_{i,n}\big(L_ix_n-r_i-
\nabla\ell_i^*(v_{i,n})-d_{i,n}\big)\\[2mm]
v_{i,n+1}=v_{i,n}+\lambda_n
\Big(\prox^{U_{i,n}^{-1}}_{\gamma_ng_i^{*}}
w_{i,n}+b_{i,n}-v_{i,n}\Big).\\[1mm] 
\end{array}
\right.\\[2mm]
\end{array}
\right.
\end{equation}
Then \eqref{e:primal} admits a unique solution $\overline{x}$
and the following hold for some solution 
$(\overline{v_1},\ldots,\overline{v_m})$ to \eqref{e:dual}.
\begin{enumerate}
\item
\label{ex:2012-04-11i}
$(\forall i\in\{1,\ldots,m\})$ $v_{i,n}\weakly\overline{v_i}$.
In addition, 
$\overline{x}=\prox_f(z-\sum_{i=1}^mL_i^*\overline{v_i})$.
\item
\label{ex:2012-04-11ii} 
$x_n\to\overline{x}$.
\end{enumerate}
\end{example}
\begin{proof}
Set $A=\partial f$ and, for every $i\in\{1,\ldots,m\}$,
$B_i=\partial g_i$ and $D_i=\partial\ell_i$. In this setting,
it follows from the analysis of \cite[Section~4]{Svva12} that
\eqref{e:primal}--\eqref{e:dual} is a special 
case of Problem~\ref{prob:5.1} and, using \eqref{e:prox3},
that \eqref{e:2012-04-11a} is a special case of 
\eqref{e:2012-04-08a}. Altogether, the claims follow from 
Corollary~\ref{c:2}.
\end{proof}

We conclude this section with an application to a composite best 
approximation problem. 

\begin{example}
\label{ex:2012-04-18}
Let $z\in\HH$, let $C$ be a closed convex subset of $\HH$, let 
$\alpha\in\RPP$, let $(\eta_n)_{n\in\NN}\in\ell_+^1(\NN)$, let 
$(a_n)_{n\in\NN}$ be an absolutely summable sequence in $\HH$, 
and let $m$ be a strictly positive integer. For every 
$i\in\{1,\ldots,m\}$, let $r_i\in\GG_i$, let $D_i$ be a closed 
convex subset of $\GG_i$, let $v_{i,0}\in\GG_i$, let 
$(b_{i,n})_{n\in\NN}$ be an absolutely 
summable sequence in $\GG_i$, let $(U_{i,n})_{n\in\NN}$ be 
a sequence in $\BP_{\alpha}(\GG_i)$, and suppose that 
$0\neq L_i\in\BL(\HH,\GG_i)$. The problem is to
\begin{equation}
\label{e:bprimal}
\minimize{\substack{x\in C\\ L_1x\in r_1+D_1\\ \vdots\\
L_mx\in r_m+D_m}}{\|x-z\|}.
\end{equation}
Suppose that \eqref{e:2012-04-08d} holds,
that $(\max_{1\leq i\leq m}\sup_{n\in\NN}\|U_{i,n}\|)
\sum_{i=1}^m\|L_i\|^2<2$, and that
\begin{equation}
\label{e:2012-04-18b}
(r_1,\ldots,r_m)\in\sri\menge{(L_ix-y_i)_{1\leq i\leq m}}
{x\in C\;\text{and}\;(\forall i\in\{1,\ldots,m\})\;\:y_i\in D_i}.
\end{equation}
Set
\begin{equation}
\label{e:2012-04-18a}
(\forall n\in\NN)\quad
\left\lfloor
\begin{array}{l}
s_n=z-\sum_{i=1}^mL_i^*v_{i,n}\\
x_n=P_Cs_n+a_n\\
\operatorname{For}\;i=1,\ldots,m\\
\left\lfloor
\begin{array}{l}
w_{i,n}=v_{i,n}+U_{i,n}\big(L_ix_n-r_i\big)\\[2mm]
v_{i,n+1}=w_{i,n}-U_{i,n}\Big(P_{D_i}^{U_{i,n}}
\big(U_{i,n}^{-1}w_{i,n}\big)+b_{i,n}\Big).\\[1mm] 
\end{array}
\right.\\[2mm]
\end{array}
\right.
\end{equation}
Then $(x_n)_{n\in\NN}$ converges strongly to the 
unique solution $\overline{x}$ to \eqref{e:bprimal}. 
\end{example}
\begin{proof}
Set $f=\iota_C$ and $(\forall i\in\{1,\ldots,m\})$ 
$g_i=\iota_{D_i}$, $\ell_i=\iota_{\{0\}}$, and 
$(\forall n\in\NN)$ $\gamma_n=\lambda_n=1$ and
$d_{i,n}=0$. Then \eqref{e:2012-04-18b} and 
Proposition~\ref{p:2012-03-20}\ref{p:heidelberg2011-07-06i} 
imply that 
\eqref{e:2012-04-18d} is satisfied. Moreover, in view of
Example~\ref{ex:resolvent}\ref{ex:resolventiii}, 
\eqref{e:2012-04-18a} is a special case of \eqref{e:2012-04-11a}.
Hence, the claim follows from 
Example~\ref{ex:2012-04-11}\ref{ex:2012-04-11ii}.
\end{proof}

\section{Inclusions involving cocoercive operators}
\label{sec:6}

We revisit a primal-dual problem investigated first in
\cite{Svva12}, and then in \cite{Bang12} with the scenario 
described below.

\begin{problem}
\label{prob:8}
Let $z\in\HH$, let $A\colon\HH\to 2^{\HH}$ be maximally monotone, 
let $\mu\in\RPP$, let $C\colon\HH\to\HH$ be $\mu$-cocoercive, 
and let $m$ be a strictly positive integer.
For every $i\in\{1,\ldots, m\}$, let $r_i\in\GG_i$, 
let $B_i\colon \GG_i\to2^{\GG_i}$ be maximally monotone, 
let $\nu_i\in\RPP$,
let $D_i\colon \GG_i \to 2^{\GG_i}$ be maximally monotone and
$\nu_i$-strongly monotone,
and suppose that $0\neq L_i\in \BL(\HH,\GG_i)$. 
The problem is to solve the primal inclusion
\begin{equation}
\label{e:primal12:41}
\text{find}\quad\overline{x}\in\HH
\quad\text{such that}\quad
z\in A\overline{x}+\sum_{i=1}^mL^{*}_i
\big((B_i\infconv D_i)(L_i\overline{x}-r_i)\big)+C\overline{x},
\end{equation}
together with the dual inclusion
\begin{multline}
\label{e:dual12:41}
\text{find}\;\;\overline{v_1}\in\GG_1,\:\ldots,\:
\overline{v_m}\in\GG_m\;\:\text{such that}\\
(\exi x\in\HH)\quad
\begin{cases}
z-\sum_{i=1}^m L_{i}^*\overline{v}_i \in Ax+Cx\\
(\forall i\in\{1,\ldots,m\})\;\overline{v}_i\in 
(B_i\infconv D_i)(L_ix-r_i).
\end{cases}
\end{multline}
\end{problem}

\begin{corollary}
\label{c:aicm}
In Problem~\ref{prob:8}, suppose that 
\begin{equation}
\label{e:1rang820}
z\in\ran
\bigg(A+\sum_{i=1}^mL^{*}_i
\big((B_i\infconv D_i)(L_i\cdot-r_i)\big)+C\bigg),
\end{equation}
and set
\begin{equation}
\beta=\min\{\mu,\nu_1,\ldots,\nu_m\}.
\end{equation}
Let $\varepsilon\in\left]0,\min\{1,\beta\}\right[$,
let $\alpha\in\RPP$, let $(\lambda_n)_{n\in\NN}$ be a sequence in 
$\left[\varepsilon,1\right]$, let $x_0\in\HH$, let 
$(a_n)_{n\in\NN}$ and $(c_n)_{n\in\NN}$ be absolutely summable 
sequences in $\HH$, and let $(U_n)_{n\in\NN}$ be a sequence 
in $\BP_{\alpha}(\HH)$ such that 
$(\forall n\in\NN)\; U_{n+1}\succcurlyeq U_n$.
For every $i\in\{1,\ldots,m\}$, let
$v_{i,0}\in\GG_i$, and let $(b_{i,n})_{n\in\NN}$ 
and $(d_{i,n})_{n\in\NN}$ be  
absolutely summable sequences in $\GG_i$, and let
$(U_{i,n})_{n\in\NN}$ be a sequence in 
$\BP_{\alpha}(\GG_i)$ such that
$(\forall n\in\NN)$ $U_{i,n+1}\succcurlyeq U_{i,n}$.
For every $n\in\NN$, set
\begin{equation}
\label{e:121}
\delta_n=
\Bigg(\sqrt{\sum_{i= 1}^m\|\sqrt{U_{i,n}} 
L_i\sqrt{U_n}\|^2}\Bigg)^{-1}-1,
\end{equation}
and suppose that
\begin{equation}
\label{e:2f9h79p}
\zeta_n=\frac{\delta_n}
{(1+\delta_n)\max\{\|U_n\|,\|U_{1,n}\|,\ldots,\|U_{m,n}\|\}}
\geq\frac{1}{2\beta-\varepsilon}.
\end{equation}
Set
\begin{equation}
\label{e:cocoeqal8:20}
(\forall n\in\NN)\quad 
\begin{array}{l}
\left\lfloor
\begin{array}{l}
p_n=J_{U_nA}\Big(x_n-U_n
\big(\sum_{i=1}^mL_{i}^*v_{i,n}+C x_n+c_n-z\big)\Big)+a_n\\
y_n=2p_n-x_n\\
x_{n+1}=x_n+\lambda_n(p_n-x_n)\\
\operatorname{For}\;i=1,\ldots, m\\
\left\lfloor
\begin{array}{l}
q_{i,n}=J_{U_{i,n}B_{i}^{-1}}
\Big(v_{i,n}+U_{i,n}\big(L_iy_n- D_{i}^{-1}v_{i,n} 
-d_{i,n}-r_i\big)\Big)+b_{i,n}\\
v_{i,n+1}=v_{i,n}+\lambda_n(q_{i,n}-v_{i,n}).\\
\end{array}
\right.\\[2mm]
\end{array}
\right.\\[2mm]
\end{array}
\end{equation}
Then the following hold for some solution $\overline{x}$ to 
\eqref{e:primal12:41} and some solution 
$(\overline{v_1},\ldots,\overline{v_m})$ to \eqref{e:dual12:41}.
\begin{enumerate}
\item
\label{c:aicmi}
$x_n\weakly\overline{x}$.
\item
\label{c:aicmii}
$(\forall i\in\{1,\ldots,m\})$ $v_{i,n}\weakly\overline{v_i}$. 
\item
\label{c:aicmiii}
Suppose that $C$ is demiregular at $\overline{x}$.
Then $x_n\to\overline{x}$.
\item
\label{c:aicmiv}
Suppose that, for some $j\in\{1,\ldots,m\}$, $D_j^{-1}$ is 
demiregular at $\overline{v_j}$. Then 
$v_{j,n}\to\overline{v_j}$.
\end{enumerate}
\end{corollary}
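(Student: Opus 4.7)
The plan is to recast \eqref{e:primal12:41}--\eqref{e:dual12:41} as a single monotone inclusion on the primal-dual product space $\KKK=\HH\oplus\GGG$ and invoke Theorem~\ref{t:1} after a suitable change of metric on $\KKK$. Define the maximally monotone operator $\boldsymbol{M}\colon(x,\boldsymbol{v})\mapsto(Ax-z,(B_i^{-1}v_i+r_i)_{1\leq i\leq m})$, the bounded skew-adjoint operator $\boldsymbol{S}\colon(x,\boldsymbol{v})\mapsto(\sum_{i=1}^mL_i^*v_i,(-L_ix)_{1\leq i\leq m})$, and the block-diagonal operator $\boldsymbol{Q}\colon(x,\boldsymbol{v})\mapsto(Cx,(D_i^{-1}v_i)_{1\leq i\leq m})$, which is $\beta$-cocoercive with $\beta=\min\{\mu,\nu_1,\ldots,\nu_m\}$ by inspection of each block. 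Then $\boldsymbol{M}+\boldsymbol{S}$ is maximally monotone on $\KKK$ \cite[Corollary~24.4(i)]{Livre1}, and a direct unpacking of the inclusion $\boldsymbol{0}\in(\boldsymbol{M}+\boldsymbol{S}+\boldsymbol{Q})(\overline{x},\overline{\boldsymbol{v}})$ using \eqref{e:parasum} identifies its zeros with pairs $(\overline{x},\overline{\boldsymbol{v}})$ solving \eqref{e:primal12:41}--\eqref{e:dual12:41}; \eqref{e:1rang820} guarantees that such a zero exists.

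Next, introduce the symmetric block operator
\begin{equation*}
\boldsymbol{W}_n\colon(x,\boldsymbol{v})\mapsto\bigg(U_n^{-1}x-\sum_{i=1}^mL_i^*v_i,\;
(U_{i,n}^{-1}v_i-L_ix)_{1\leq i\leq m}\bigg),
\end{equation*}
and set $\boldsymbol{U}_n=\boldsymbol{W}_n^{-1}$. Expanding
$\pscal{\boldsymbol{W}_n(x,\boldsymbol{v})}{(x,\boldsymbol{v})}=\|x\|^2_{U_n^{-1}}+\sum_{i=1}^m\|v_i\|^2_{U_{i,n}^{-1}}-2\sum_{i=1}^m\scal{L_ix}{v_i}$ and applying Cauchy--Schwarz in the $(U_n,U_{i,n})$-weighted inner products yields, via Lemma~\ref{l:kjMMXII} and \eqref{e:121}--\eqref{e:2f9h79p}, the bound $\boldsymbol{W}_n\in\BP_{\zeta_n}(\KKK)$, so that $\sup_n\|\boldsymbol{U}_n\|\leq 1/\zeta_n\leq 2\beta-\varepsilon$. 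The hypotheses $U_{n+1}\succcurlyeq U_n$ and $U_{i,n+1}\succcurlyeq U_{i,n}$ give (by Lemma~\ref{l:kjMMXII}\ref{l:kjMMXII-i} applied to each diagonal block) $\boldsymbol{W}_{n+1}\preccurlyeq\boldsymbol{W}_n$, hence $\boldsymbol{U}_{n+1}\succcurlyeq\boldsymbol{U}_n$, and \eqref{e:palawan2012-03-09} holds with $\eta_n\equiv 0$.

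The main step is to verify that \eqref{e:cocoeqal8:20} coincides with the inexact forward-backward iteration
\begin{equation*}
\boldsymbol{x}_{n+1}=\boldsymbol{x}_n+\lambda_n\bigl(J_{\boldsymbol{U}_n(\boldsymbol{M}+\boldsymbol{S})}
\bigl(\boldsymbol{x}_n-\boldsymbol{U}_n(\boldsymbol{Q}\boldsymbol{x}_n+\boldsymbol{e}_n)\bigr)+\boldsymbol{a}_n-\boldsymbol{x}_n\bigr)
\end{equation*}
for the inclusion $\boldsymbol{0}\in(\boldsymbol{M}+\boldsymbol{S})\boldsymbol{x}+\boldsymbol{Q}\boldsymbol{x}$, with $\boldsymbol{x}_n=(x_n,\boldsymbol{v}_n)$ and absolutely summable error sequences $\boldsymbol{a}_n,\boldsymbol{e}_n$ built from $a_n,c_n,(b_{i,n})_i,(d_{i,n})_i$. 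Writing the resolvent inclusion $\boldsymbol{W}_n\boldsymbol{p}_n+(\boldsymbol{M}+\boldsymbol{S})\boldsymbol{p}_n\ni\boldsymbol{W}_n\boldsymbol{x}_n-\boldsymbol{Q}\boldsymbol{x}_n-\boldsymbol{e}_n$ coordinate-wise, the off-diagonal $\pm L_i$ terms in $\boldsymbol{W}_n$ cancel in the primal block with the corresponding entries of $\boldsymbol{S}$, yielding $p_n=J_{U_nA}(x_n-U_n(\sum_iL_i^*v_{i,n}+Cx_n+c_n-z))$; in the dual block the same cancellation instead doubles the $L_i$ contribution from $p_n$, producing the Gauss--Seidel-type update in which $y_n=2p_n-x_n$ appears. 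This untangling, which folds the outer errors $a_n,(b_{i,n})_i$ into $\boldsymbol{a}_n$ and the discrepancy induced by using $y_n=2p_n-x_n$ rather than $2(p_n-a_n)-x_n$ into $\boldsymbol{e}_n$, is the main technical obstacle of the proof.

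Once the identification is made, $\gamma_n\equiv 1$ lies in $[\varepsilon,(2\beta-\varepsilon)/\sup_n\|\boldsymbol{U}_n\|]$, all hypotheses of Theorem~\ref{t:1} are met, and its part~\ref{t:1i} delivers weak convergence of $\boldsymbol{x}_n$ to some zero $\overline{\boldsymbol{x}}=(\overline{x},\overline{\boldsymbol{v}})$ of $\boldsymbol{M}+\boldsymbol{S}+\boldsymbol{Q}$, proving \ref{c:aicmi}--\ref{c:aicmii}. For \ref{c:aicmiii}--\ref{c:aicmiv}, part~\ref{t:1ii} of Theorem~\ref{t:1} yields $\sum_n\|\boldsymbol{Q}\boldsymbol{x}_n-\boldsymbol{Q}\overline{\boldsymbol{x}}\|^2<\pinf$ in the norm of $\KKK$, so in particular $Cx_n\to C\overline{x}$ and $D_i^{-1}v_{i,n}\to D_i^{-1}\overline{v_i}$ for every $i$. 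Combined with the weak limits, demiregularity of $C$ at $\overline{x}$ (resp.\ of $D_j^{-1}$ at $\overline{v_j}$) then forces, via Definition~\ref{d:demir}, the strong convergence $x_n\to\overline{x}$ (resp.\ $v_{j,n}\to\overline{v_j}$).
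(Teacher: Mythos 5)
Your proposal follows the paper's proof essentially verbatim: the same primal--dual product space $\KKK$, the same splitting into a maximally monotone part (your $\boldsymbol{M}+\boldsymbol{S}$ is the paper's $\boldsymbol{A}$), the same $\beta$-cocoercive diagonal part, the same non-diagonal metric (your $\boldsymbol{W}_n$ is the paper's $\boldsymbol{V}_n$) with the same coercivity estimate $\|\boldsymbol{W}_n^{-1}\|\leq 1/\zeta_n\leq 2\beta-\varepsilon$, and the same reduction to Theorem~\ref{t:1} with $\gamma_n\equiv 1$. The only hypothesis you leave implicit is the uniform upper bound $\|\boldsymbol{W}_n\|\leq 1/\alpha+\|\boldsymbol{S}\|$, which is needed to place $\boldsymbol{W}_n^{-1}$ in $\BP_{\alpha'}(\KKK)$ for a fixed $\alpha'>0$ and which the paper supplies in \eqref{e:baba2}.
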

\begin{proof} 
Define $\GGG$ as in Notation \ref{n:palawan-mai2008-} 
and set $\KKK=\HH\oplus\GGG$. We denote the scalar product and the
norm of $\KKK$ by $\psscal{\cdot}{\cdot}$ and $||||\cdot||||$, 
respectively. As shown in \cite{Svva12,Bang12}, the operators
\begin{equation}
\begin{cases}
\label{e:maximal1}
{\boldsymbol A}\hskip -3mm&\colon\KKK\to 2^{\KKK}\colon
(x,v_1,\ldots,v_m)\mapsto (\sum_{i=1}^mL_{i}^*v_i-z+Ax)
\times(r_1-L_1x+B_{1}^{-1}v_1)\times\ldots\times\\
&\hskip 92mm (r_m-L_mx+B^{-1}_{m}v_m)\\
{\boldsymbol B}\hskip -3mm&\colon\KKK\to\KKK\colon
(x,v_1,\ldots,v_m)\mapsto 
\big(Cx,D^{-1}_1v_1,\ldots,D^{-1}_mv_m\big)\\[2mm]
{\boldsymbol S}\hskip -3mm&\colon\KKK\to \KKK\colon
(x,v_1,\ldots,v_m)\mapsto
\bigg(\sum_{i=1}^mL_{i}^*v_i,-L_1x,\ldots,-L_mx\bigg)
\end{cases}
\end{equation}
are maximally monotone and, moreover, ${\boldsymbol B}$ is 
$\beta$-cocoercive \cite[Eq.~(3.12)]{Bang12}. Furthermore, as shown
in \cite[Section~3]{Svva12}, under condition \eqref{e:1rang820},
$\zer({\boldsymbol A}+{\boldsymbol B})\neq\emp$ and
\begin{equation}
\label{e:khoqua}
(\overline{x},\overline{{\boldsymbol v}}) 
\in\zer({\boldsymbol A}+{\boldsymbol B})
\quad\Rightarrow\quad
\overline{x}\;\:\text{solves \eqref{e:primal12:41} and}\;\;
\overline{{\boldsymbol v}}\;\;\text{solves \eqref{e:dual12:41}}.
\end{equation}
Next, for every $n\in\NN$, define
\begin{equation}
\begin{cases}
\label{e:uvt}
{\boldsymbol U}_n\colon\KKK\to\KKK\colon
(x,v_1,\ldots, v_m)
\mapsto\Big(U_nx,U_{1,n}v_1,\ldots,U_{m,n}v_m\Big)\\[2mm]
{\boldsymbol V}_n\colon\KKK\to \KKK\colon
(x,v_1,\ldots,v_m)\mapsto 
\bigg(U_n^{-1}x-\sum_{i=1}^m L^{*}_iv_i,\big(-L_ix+ 
U_{i,n}^{-1}v_i\big)_{1\leqslant i\leqslant m}\bigg)\\
{\boldsymbol T}_n\colon\HH\to\GGG\colon 
x\mapsto\Big(\sqrt{U_{1,n}}L_1x,\ldots,\sqrt{U_{m,n}}L_mx\Big).
\end{cases}
\end{equation}
It follows from our assumptions and 
Lemma~\ref{l:kjMMXII}\ref{l:kjMMXII-iii} that 
\begin{equation}
\label{e:8/11a}
(\forall n\in\NN)\quad
{\boldsymbol U}_{n+1}\succcurlyeq {\boldsymbol U}_n\in 
\BP_{\alpha}(\KKK)\quad\text{and}\quad
||{\boldsymbol U}_n^{-1}||\leq\frac{1}{\alpha}.
\end{equation}
Moreover, for every $n\in\NN$, ${\boldsymbol V}_n\in\SL(\KKK)$ 
since ${\boldsymbol U}_n\in\SL(\KKK)$. In addition, \eqref{e:uvt} 
and \eqref{e:8/11a} yield
\begin{equation}
\label{e:baba2}
\big(\forall n\in\NN\big)\quad\|{\boldsymbol V}_n\| 
\leq\|{\boldsymbol U}^{-1}_n\|+\|{\boldsymbol S}\|\leq\rho,
\quad\text{where}\quad
\rho=\frac{1}{\alpha}+\sqrt{\sum_{i=1}^m\|L_i\|^2}.
\end{equation}
On the other hand,
\begin{align}
\label{e:muaxuan1aa}
(\forall n\in\NN)(\forall x\in\HH)\quad 
|||{\boldsymbol T}_n x|||^2
&=\sum_{i=1}^m\big\|\sqrt{U_{i,n}}
L_i\sqrt{U_n}\sqrt{U_n}^{\:-1}x\big\|^2\nonumber\\
&\leq\|x\|_{U^{-1}_n}^2\sum_{i=1}^m\big\|\sqrt{U_{i,n}}L_i
\sqrt{U_n}\big\|^2\nonumber\\
&=\beta_n\|x\|_{U^{-1}_n}^2,
\end{align}
where $(\forall n\in\NN)$
$\beta_n=\sum_{i=1}^m\big\|\sqrt{U_{i,n}}L_i\sqrt{U_n}\big\|^2$.
Hence, \eqref{e:121} yields
\begin{equation}
\label{e:q407}
(\forall n\in\NN)\quad(1+\delta_n)\beta_n=\frac{1}{1+\delta_n}.
\end{equation}
Therefore, for every $n\in\NN$ and every 
${\boldsymbol x}=(x,v_1,\ldots,v_m)\in\KKK$, using 
\eqref{e:uvt}, \eqref{e:muaxuan1aa}, \eqref{e:q407},
Lemma~\ref{l:kjMMXII}\ref{l:kjMMXII-ii}, and \eqref{e:2f9h79p}, 
we obtain
\begin{align}
\psscal{{\boldsymbol x}}{{\boldsymbol V}_n {\boldsymbol x}}
&=\scal{x}{U_n^{-1}x}+ 
\sum_{i=1}^m\scal{v_i}{U^{-1}_{i,n}v_{i}}
-2\sum_{i=1}^m\scal{L_ix}{v_i}\nonumber\\
&=\|x\|_{U^{-1}_n}^{2}+\sum_{i=1}^m\|v_i\|_{U^{-1}_{i,n}}^{2}
-2\sum_{i=1}^m\scal{\sqrt{U_{i,n}}L_ix}{\sqrt{U_{i,n}}^{\:-1}v_i}
\nonumber\\
&=\|x\|_{U^{-1}_n}^{2}+\sum_{i=1}^m\|v_i\|_{U^{-1}_{i,n}}^{2}
\nonumber\\ 
&\quad\;-2\pscal{\sqrt{(1+\delta_n)\beta_n}^{\,-1}{\boldsymbol T}_nx}
{\sqrt{(1+\delta_n)\beta_n}\big(\sqrt{U_{1,n}}^{\:-1} v_1,\ldots,
\sqrt{U_{m,n}}^{\:-1} v_m \big)}\nonumber\\
&\geq\|x\|_{U^{-1}_n}^{2}+ 
\sum_{i=1}^m\|v_i\|_{U^{\:-1}_{i,n}}^{2}
-\bigg(\frac{|||{\boldsymbol T}_n x|||^2}{(1+\delta_n)\beta_n}
+(1+\delta_n)\beta_n
\sum_{i=1}^m\|v_i\|^{2}_{U^{-1}_{i,n}}\bigg)\nonumber\\
&\geq\|x\|_{U^{-1}_n}^{2}+ 
\sum_{i=1}^m\|v_i\|_{U^{\:-1}_{i,n}}^{2}
-\bigg(\frac{\|x\|^2_{U^{-1}_n}}{(1+\delta_n)}
+(1+\delta_n)\beta_n
\sum_{i=1}^m\|v_i\|^{2}_{U^{-1}_{i,n}}\bigg)\nonumber\\
&=\frac{\delta_n}{1+\delta_n}\Big(\|x\|_{U^{-1}_n}^{2}+ 
\sum_{i=1}^m\|v_i\|_{U^{\:-1}_{i,n}}^{2}\big)\nonumber\\
&\geq\frac{\delta_n}{1+\delta_n}\bigg(\|U_n\|^{-1}\|x\|^2 
+\sum_{i=1}^m\|U_{i,n}\|^{-1}\|v_i\|^{2} \bigg)\nonumber\\
&\geq\zeta_n||||{\boldsymbol x}||||^{2}\label{e:ancan1}.
\end{align}
In turn, it follows from Lemma~\ref{l:kjMMXII}\ref{l:kjMMXII-iii} 
and \eqref{e:2f9h79p} that 
\begin{equation}
\label{e:conkhi}
(\forall n\in\NN)\quad\|{\boldsymbol V}_n^{-1}\|\leq
\frac{1}{\zeta_n}\leq 2\beta-\varepsilon.
\end{equation}
Moreover, by Lemma~\ref{l:kjMMXII}\ref{l:kjMMXII-i}, 
$(\forall n\in\NN)$ 
$({\boldsymbol U}_{n+1}\succcurlyeq {\boldsymbol U}_n$
$\Rightarrow$
${\boldsymbol U}_n^{-1}\succcurlyeq {\boldsymbol U}^{-1}_{n+1}$
$\Rightarrow$
${\boldsymbol V}_n\succcurlyeq {\boldsymbol V}_{n+1}$
$\Rightarrow$
${\boldsymbol V}_{n+1}^{-1}\succcurlyeq {\boldsymbol V}_n^{-1})$.
Furthermore, we derive from Lemma~\ref{l:kjMMXII}\ref{l:kjMMXII-ii}
and \eqref{e:baba2} that
\begin{equation}
\label{e:barca}
(\forall{\boldsymbol x}\in\KKK)\quad\psscal{{\boldsymbol V}_n^{-1}
{\boldsymbol x}}{{\boldsymbol x}}\geq
\|{\boldsymbol V}_n\|^{-1}||||{\boldsymbol x}||||^{2}
\geq\frac{1}{\rho}||||{\boldsymbol x}||||^2.
\end{equation}
Altogether, 
\begin{equation}
\label{e:manu}
\sup_{n\in\NN}\|{\boldsymbol V}_{n}^{-1}\|\leq 2\beta-\varepsilon\\
\quad\text{and}\quad
(\forall n\in\NN)\quad 
{\boldsymbol V}_{n+1}^{-1}\succcurlyeq{\boldsymbol V}_n^{-1}
\in{\EuScript P}_{1/\rho}(\KKK).
\end{equation}
Now set, for every $n\in\NN$, 
\begin{equation}
\label{e:2012-05-01}
\begin{cases}
{\boldsymbol x}_n=(x_n,v_{1,n},\ldots,v_{m,n})\\
{\boldsymbol y}_n=(p_n,q_{1,n},\ldots,q_{m,n})\\
{\boldsymbol a}_n=(a_n,b_{1,n},\ldots,b_{m,n})\\
{\boldsymbol c}_n=(c_n,d_{1,n},\ldots,d_{m,n})\\
{\boldsymbol d}_n=(U_n^{-1}a_n,U_{1,n}^{-1}b_{1,n},\ldots, 
U_{m,n}^{-1}b_{m,n})
\end{cases}
\quad\text{and}\quad
{\boldsymbol b}_n=({\boldsymbol S}
+{\boldsymbol V}_n){\boldsymbol a}_n+{\boldsymbol c}_n
-{\boldsymbol d}_n.
\end{equation}
Then 
$\sum_{n\in\NN}||||{\boldsymbol a}_n||||<\pinf$,
$\sum_{n\in\NN}||||{\boldsymbol c}_n||||<\pinf$,
and $\sum_{n\in\NN}|||| {\boldsymbol d}_n||||<\pinf$.
Therefore \eqref{e:baba2} implies that
$\sum_{n\in\NN}||||{\boldsymbol b}_n||||<\pinf$.
Furthermore, using the same arguments as in 
\cite[Eqs.~(3.22)--(3.35)]{Bang12}, we derive from 
\eqref{e:cocoeqal8:20} and \eqref{e:maximal1} that
\begin{align}
\label{e:fb9}
(\forall n\in\NN)\quad
{\boldsymbol x}_{n+1}
&={\boldsymbol x}_n+\lambda_n\Big(J_{{\boldsymbol V}_n^{-1}
{\boldsymbol A}}\big({\boldsymbol x}_n-
{\boldsymbol V}_{n}^{-1}({\boldsymbol B}{\boldsymbol x}_n+
{\boldsymbol b}_n)\big)+{\boldsymbol a}_n-
{\boldsymbol x}_n\Big).
\end{align}
We observe that \eqref{e:fb9} has the structure of the 
variable metric forward-backward splitting algorithm 
\eqref{e:forward}, where 
$(\forall n\in\NN)$ $\gamma_n=1$. Finally, \eqref{e:conkhi} and 
\eqref{e:manu} imply that all the conditions in Theorem~\ref{t:1} 
are satisfied. 

\ref{c:aicmi}\&\ref{c:aicmii}:
Theorem~\ref{t:1}\ref{t:1i} asserts that there exists
\begin{equation}
\label{e:2012-05-10}
\overline{{\boldsymbol x}}=(\overline{x},\overline{v_1},\ldots,
\overline{v_m})\in\zer({\boldsymbol A}+{\boldsymbol B}) 
\end{equation}
such that ${\boldsymbol x}_n\weakly\overline{{\boldsymbol x}}$.
In view of \eqref{e:khoqua}, the assertions are proved.

\ref{c:aicmiii}\&\ref{c:aicmiv}:
It follows from Theorem~\ref{t:1}\ref{t:1ii} that 
${\boldsymbol B}{\boldsymbol x}_n\to{\boldsymbol B}
\overline{{\boldsymbol x}}$. Hence, \eqref{e:maximal1},
\eqref{e:2012-05-01}, and \eqref{e:2012-05-10} yield 
\begin{equation}
\label{e:fin1}
Cx_n\to C\overline{x} \quad\text{and}\quad 
\big(\forall i\in\{1,\ldots,m\}\big)\quad
D_{i}^{-1}v_{i,n}\to D_{i}^{-1}\overline{v_{i}}. 
\end{equation}
Hence the results follow from \ref{c:aicmi}\&\ref{c:aicmii}
and Definition~\ref{d:demir}.
\end{proof}

\begin{remark}
\label{r:2012-05-05}
In the case when $C=\rho\Id$ for some $\rho\in\RPP$, 
Problem~\ref{prob:8} reduces to Problem~\ref{prob:5.1}. 
However, the algorithm obtained in 
Corollary~\ref{p:2012-03-20} is quite different from that of
Corollary~\ref{c:aicm}. Indeed, the former was obtained by 
applying the forward-backward algorithm \eqref{e:forward}
to the dual inclusion, which was made possible by the strong 
monotonicity of the primal problem. By contrast, the latter 
relies on an application of \eqref{e:forward} in a primal-dual 
product space.
\end{remark}

\begin{example}
\label{ex:2012-30-04}
Let $z\in\HH$, let $f\in\Gamma_0(\HH)$, 
let $\mu\in\RPP$, let $h\colon\HH\to\RR$ be convex and 
differentiable with a $\mu^{-1}$-Lipschitzian gradient,
let $(a_n)_{n\in\NN}$ and $(c_n)_{n\in\NN}$ be absolutely summable 
sequences in $\HH$, let $\alpha\in\RPP$, let $m$ be a strictly 
positive integer, and let $(U_n)_{n\in\NN}$ be a sequence 
in $\BP_{\alpha}(\HH)$ such that 
$(\forall n\in\NN)$ $U_{n+1}\succcurlyeq U_n$.
For every $i\in\{1,\ldots,m\}$, let 
$r_i\in\GG_i$, let $g_i\in\Gamma_0(\GG_i)$, let $\nu_i\in\RPP$,
let $\ell_i\in\Gamma_0(\GG_i)$ be $\nu_i$-strongly convex,
let $v_{i,0}\in\GG_i$, let $(b_{i,n})_{n\in\NN}$ and 
$(d_{i,n})_{n\in\NN}$ be absolutely summable sequences in $\GG_i$, 
suppose that $0\neq L_i\in\BL(\HH,\GG_i)$, and let 
$(U_{i,n})_{n\in\NN}$ be a sequence in $\BP_{\alpha}(\GG_i)$ 
such that $(\forall n\in\NN)$ $U_{i,n+1}\succcurlyeq U_{i,n}$.
Furthermore, suppose that
\begin{equation}
\label{e:2012-30-04}
z\in\ran\bigg(\partial f+\sum_{i=1}^mL_i^*(\partial g_i\infconv
\partial\ell_i)(L_i\cdot-r_i)+\nabla h\bigg).
\end{equation}
The primal problem is to
\begin{equation}
\label{e:primalex6}
\underset{x\in\HH}{\text{minimize}} \;
f(x)+\sum_{i=1}^m(g_i\infconv\ell_i)(L_ix-r_i)+h(x)-\scal{x}{z},
\end{equation}
and the dual problem is to
\begin{equation}
\label{e:dualex6}
\underset{v_1\in\GG_1,\ldots,v_m\in\GG_m}{\text{minimize}} \;
(f^*\infconv h^*)\bigg(z-\sum_{i=1}^m L_{i}^*v_i\bigg)+
\sum_{i=1}^m \big(g^{*}_i(v_i)
+\ell^{*}_i(v_i)+\scal{v_i}{r_i}\big).
\end{equation}
Let $\beta=\min\{\mu,\nu_1,\ldots,\nu_m\}$,
let $\varepsilon\in\left]0,\min\{1,\beta\}\right[$,
let $(\lambda_n)_{n\in\NN}$ be a sequence in 
$\left[\varepsilon,1\right]$,
suppose that \eqref{e:2f9h79p} holds, and set 
\begin{equation}
\label{e:cocoeqal8:20coco}
(\forall n\in\NN)\quad 
\begin{array}{l}
\left\lfloor
\begin{array}{l}
p_n=\prox^{U^{-1}_n}_f\Big(x_n-U_n
\big(\sum_{i=1}^{m}L_{i}^*v_{i,n}+
\nabla h(x_n)+c_n-z\big)\Big)+a_n\\
y_n=2p_n-x_n\\
x_{n+1}=x_n+\lambda_n(p_n-x_n)\\
\operatorname{For}\;  i=1,\ldots, m\\
\left\lfloor
\begin{array}{l}
q_{i,n}=\prox^{U_{i,n}^{-1}}_{g_{i}^{*}}
\Big(v_{i,n}+U_{i,n}\big(L_iy_n- 
\nabla\ell_{i}^{*}(v_{i,n})-d_{i,n}-r_i\big)\Big)+b_{i,n}\\
v_{i,n+1}=v_{i,n}+\lambda_n(q_{i,n}-v_{i,n}).\\
\end{array}
\right.\\[2mm]
\end{array}
\right.\\[2mm]
\end{array}
\end{equation}
Then $(x_n)_{n\in\NN}$ converges weakly to a solution to 
\eqref{e:primalex6}, for every $i\in\{1,\ldots,m\}$ 
$(v_{i,n})_{n\in\NN}$ converges weakly to some
$\overline{v_i}\in\GG_i$, and 
$(\overline{v}_{1},\ldots,\overline{v}_{m})$ is 
a solution to \eqref{e:dualex6}.
\end{example}
\begin{proof}
Set $A=\partial f$, $C=\nabla h$, and 
$(\forall i\in\{1,\ldots,m\})$ 
$B_i=\partial g_i$ and $D_i=\partial \ell_i$. In this setting,
it follows from the analysis of \cite[Section~4]{Svva12} that
\eqref{e:primalex6}--\eqref{e:dualex6} is a special 
case of Problem~\ref{prob:8} and, using \eqref{e:prox3},
that \eqref{e:cocoeqal8:20coco} is a special case of 
\eqref{e:cocoeqal8:20}. Thus, the claims follow from 
Corollary~\ref{c:aicm}\ref{c:aicmi}\&\ref{c:aicmii}.
\end{proof}

\begin{remark} 
Suppose that, in Corollary~\ref{c:aicm} and 
Example~\ref{ex:2012-30-04}, there exist $\tau$ 
and $(\sigma_i)_{1\leq i\leq m}$ in $\RPP$ such that
$(\forall n\in\NN)$ $U_n=\tau\Id$ and 
$(\forall i\in\{1,\ldots,m\})$ $U_{i,n}=\sigma_i\Id$.
Then \eqref{e:cocoeqal8:20} and \eqref{e:cocoeqal8:20coco} 
reduce to the fixed metric methods appearing in 
\cite[Eq.~(3.3)]{Bang12} and \cite[Eq.~(4.5)]{Bang12}, 
respectively (see \cite{Bang12} for further connections with
existing work).
\end{remark}

\end{document}